\newtheorem{theorem}{Theorem}[section]
\newtheorem{lemma}[theorem]{Lemma}
\theoremstyle{definition}
\theoremstyle{remark}
\numberwithin{equation}{section}
\newcommand{\mmod}[1]{\,\,(\text{mod}\,\,#1)}
\def\calA{{\mathscr A}}
\def\grN{{\mathfrak N}}
 \def\Del{{\Delta}}
 \def\Eta{{\mathrm H}}
\def\Ups{{\Upsilon}} 
\def\ome{{\omega}} \def\Ome{{\Omega}} 
\def\eps{\varepsilon}
\def\le{\leqslant} \def\ge{\geqslant}
\begin{document}
\title[Partitio Numerorum]{Partitio Numerorum: sums of a prime\\ and a number of $k$-th 
powers}
\author[J\"org Br\"udern]{J\"org Br\"udern}
\address{Mathematisches Institut, Bunsenstrasse 3--5, D-37073 G\"ottingen, Germany}
\email{jbruede@gwdg.de}
\author[Trevor D. Wooley]{Trevor D. Wooley}
\address{Department of Mathematics, Purdue University, 150 N. University Street, West 
Lafayette, IN 47907-2067, USA}
\email{twooley@purdue.edu}
\subjclass[2010]{11P55, 11P05, 11P32}
\keywords{Waring problem, Goldbach problem, Hardy-Littlewood method.}
\thanks{First Author supported by Deutsche Forschungsgemeinschaft Project Number 
255083470. Second author supported by NSF grants DMS-1854398 and DMS-2001549.}
\date{}

\begin{abstract} Let $k$ be a natural number and let $c=2.134693\ldots$ be the unique 
real solution of the equation $2c=2+\log (5c-1)$ in $[1,\infty)$. Then, when $s\ge ck+4$, 
we establish an asymptotic lower bound of the expected order of magnitude for the number 
of representations of a large positive integer as the sum of one prime and $s$ positive 
integral $k$-th powers.
\end{abstract}
\maketitle

\section{Introduction} Although strongly influenced by an earlier contribution of Hardy and 
Ramanujan \cite{HR}, the famous series {\em Partitio Numerorum} written jointly by Hardy 
and Littlewood marks the arrival of the circle method. In the speculative section of part III 
(see \cite{pn3}), they considered representations of natural numbers $n$ as the sum of a 
prime and a number of $k$-th powers. Thus, the equation
\begin{equation}\label{1.1}
n= p+x_1^k + \ldots + x_s^k,
\end{equation}
in which $k\ge 2$ and $s\ge 1$ are given natural numbers, is to be solved in primes $p$ 
and natural numbers $x_j$ $(1\le j\le s)$. Although their conjectures H, J and L are 
concerned with squares and cubes only, it is plain from the discussion that their method 
supports a conjectural asymptotic formula for the number $r(n)=r_{k,s}(n)$ of solutions of 
\eqref{1.1} in general. If this formula were true for $r_{k,1}(n)$, then all sufficiently large 
$n$ for which the polynomial $n-x^k$ is irreducible over the rationals would be the sum of 
a prime and a $k$-th power. The analogous formula for $r_{k,2}(n)$ suggests that all large 
$n$ are the sum of a prime and two $k$-th powers. There is an extensive literature related 
to the case of squares, in which $k=2$ (e.g.~\cite{Halb,S}), culminating in the works of 
Hooley \cite{HooActa, HooTract} and Linnik \cite{L1, L2} that confirm the Hardy-Littlewood 
formula for $r_{2,2}(n)$. Miech \cite{M} showed that the formula for $r_{2,1}(n)$ holds 
for almost all $n$, in the sense that the exceptional $n$ have density zero.\par  

For $k\ge 3$, less is known. There are quantitative results allied to that of Miech counting 
exceptional $n$ where the conjectured formula for $r_{k,1}(n)$ fails (most recently in work 
of Br\"udern \cite{Tsuk}). Other results concern the sparsity of numbers $n$ where 
$r_{k,1}(n)=0$, for example \cite{BP}. We are not aware of noteworthy unconditional 
contributions that relate to the cases $s>1$ of \eqref{1.1}. There are, of course, results 
that follow routinely from mean value estimates for $k$-th power Weyl sums. As a first 
example, we mention the minor arc estimate for the eighth moment of a cubic Weyl sum of 
Vaughan \cite{WPC} in the improved form due to Boklan \cite{Bok}. This is of strength 
sufficient to decouple the prime from the $k$-th powers via Schwarz's inequality in a direct 
application of the Hardy-Littlewood method, and as a result one obtains an asymptotic 
formula for $r_{3,4}(n)$ (see \cite[Theorem 2]{Kaw1996}). For larger exponents $k$, one 
may restrict the variables $x_j$ to be smooth numbers in order to make smaller values of 
$s$ accessible. If one invokes the best currently known mean value estimates for smooth 
Weyl sums (see \cite{Woo1992, Woo1995} or Lemma \ref{lemma2.2} below) and 
decouples the prime as before, then for an explicit quantity $s_0(k)$ satisfying
\[
s_0(k)=\tfrac{1}{2}k(\log k+\log \log k+2+o(1)),
\]
one finds that
\begin{equation}\label{1.2}
r_{k,s}(n) \gg n^{s/k} / \log n  
\end{equation}
whenever $s\ge s_0(k)$. The lower bound \eqref{1.2} is of the order of magnitude that is 
suggested by the hypothetical asymptotic formula.\par

No improvement on the severe condition $s\ge s_0(k)$ is known. Our first theorem shows 
that it suffices for $s$ to grow linearly with $k$. In order to state this result precisely, let 
$c$ be the unique solution of the transcendental equation
\[
2c=2+\log (5c-1)
\]
in the interval $[1,\infty)$. The decimal representation is $c=2.134693\ldots$.

\begin{theorem}\label{thm1.1}
Let $k\in \mathbb N$ and $s\ge ck +4$. Then $r_{k,s}(n) \gg n^{s/k} / \log n$.
\end{theorem}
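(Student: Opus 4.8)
The plan is to run the Hardy--Littlewood method with the $k$-th powers restricted to $R$-smooth numbers. For each fixed $\eta>0$ the $R$-smooth numbers up to $P=n^{1/k}$ with $R=P^\eta$ have positive density, so it suffices to prove the lower bound $\gg n^{s/k}$ for the number of solutions of \eqref{1.1} with every $x_j$ so restricted. Write $\calL=\log n$, set $g(\alpha)=\sum_{p\le n}(\log p)e(p\alpha)$ with $e(\theta)=e^{2\pi i\theta}$, and let $h(\alpha)$ be the smooth Weyl sum $\sum e(x^k\alpha)$ over $R$-smooth $x\le P$. Then $\int_0^1 g(\alpha)h(\alpha)^se(-n\alpha)\,\d\alpha$ is at most $\calL\,r_{k,s}(n)$, so it is enough to bound this integral below by $\gg P^s$. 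I would dissect $[0,1)$ into major arcs $\grM$ with denominators $q\le\calL^B$, a dyadically organised family of intermediate arcs $\grK\setmin\grM$ with $\calL^B<q\le Q$ for a suitable positive power $Q$ of $n$, and minor arcs $\grm=[0,1)\setmin\grK$.

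On $\grM$ the standard treatment applies: expanding $g$ by the Siegel--Walfisz theorem and $h$ by its major-arc approximation, with a positive density factor of Dickman type, yields $\int_{\grM}\gg\grS(n)\grJ(n)$, where the singular integral satisfies $\grJ(n)\asymp P^s$ and the singular series $\grS(n)$ converges absolutely and is bounded away from $0$, uniformly in $n$; here one uses that one prime together with $s\ge2$ $k$-th powers makes every relevant congruence soluble. Absolute convergence of $\grS(n)$, equivalently of the sum over $q$ of the local factors, already requires $s>2k$, which is the origin of the restriction $c>2$. On the intermediate arcs I would replace $h(\alpha)$ by $q^{-1}S_k(q,a)v(\alpha-a/q)$, absorbing the error by a mean value estimate that commits a bounded number of the $h$-factors, and then integrate against $|g(\alpha)|$, invoking Siegel--Walfisz for $q\le\calL^B$ and Vinogradov's estimate $|g(\alpha)|\ll n(q^{-1/2}+n^{-1/5})\calL^{O(1)}$ otherwise, together with the decay of $v$ away from $a/q$. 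Summing over the dyadic ranges of $q$, the resulting series converges since $s>2k$, and on taking $B$ large the total intermediate contribution falls below $\tfrac12\grS(n)\grJ(n)$.

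The heart of the argument is the minor arcs. The naive step of decoupling the prime by Schwarz, $\int_{\grm}|g||h|^s\le(\int_0^1|g|^2)^{1/2}(\int_{\grm}|h|^{2s})^{1/2}$, surrenders a factor $n^{1/2}=P^{k/2}$ and is exactly what forces the condition $s\ge s_0(k)$ of the discussion above; it must be avoided. Instead I would keep $g$ in play, combining, in some proportion, three ingredients: a Weyl-type estimate $\sup_{\grm}|h(\alpha)|\ll P^{1-\tau+\eps}$ for smooth Weyl sums, applied to a block of the $h$-factors; the smooth Weyl sum mean values of Lemma~\ref{lemma2.2} on a second block; and an $L^2$ estimate for $g$ paired with a bounded number of the remaining $h$-factors, obtained by counting solutions of $p_1+x_1^k+\cdots=p_2+y_1^k+\cdots$, the off-diagonal solutions being controlled by the standard bound for the number of prime pairs of given difference. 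Assembling these by H\"older yields a quantitative condition linking $s$, $k$, the Weyl exponent $\tau$, and the excess exponents of Lemma~\ref{lemma2.2} --- the amounts by which $\int_0^1|h|^{2t}\,\d\alpha$ exceeds the diagonal value $P^{2t-k}$. Because those excess exponents decay geometrically in the recursion underpinning Lemma~\ref{lemma2.2}, optimising the allocation of $h$-factors among the three blocks introduces a logarithm, and the breakpoint of the optimisation is precisely the transcendental condition $2c=2+\log(5c-1)$; the constants $5$ and $1$ inside the logarithm record the exact numerology of the mean-value exponents, while the additive $4$ in the theorem absorbs the finitely many $h$-factors committed to the arc approximations, to the pairing with $g$, and to the absolute convergence of $\grS(n)$ and $\grJ(n)$. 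Combining the three regimes gives $\int_0^1 g(\alpha)h(\alpha)^se(-n\alpha)\,\d\alpha\gg P^s$, and hence Theorem~\ref{thm1.1}.

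I expect the principal obstacle to be the simultaneous calibration of the three regimes: one needs $s>2k$ for both the singular series and the intermediate pruning, while the minor-arc bound must remain linear in $k$, and the only route to the latter is to let the prime sum do genuine work --- through its power saving on $\grm$ and through its additive-energy bound when coupled to a few $k$-th powers --- rather than discarding it by Cauchy--Schwarz, which would reinstate the $k\log k$ loss. Balancing the modest per-variable gain $\tau$ against the excess in the mean values, while keeping the bookkeeping of the error terms (arc approximations, smoothing losses, the Dickman factor) under control uniformly in $n$, is the delicate part; the transcendental equation for $c$ is nothing but the threshold at which that balance can be achieved.
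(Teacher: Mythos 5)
Your overall framework (circle method with smooth Weyl sums, keeping the prime sum $g$ in play rather than discarding it by Cauchy--Schwarz) matches the paper's starting point, but the proposal misses the two devices that actually make $s\ll k$ possible, and it misattributes the source of the constant $c$. First, the singular series does \emph{not} require $s>2k$: because of the prime variable the local factors carry a weight $\mu(q)/\phi(q)$, and $S_n(q)\ll q^{-1/2}$ for prime $q$ already when $s\ge 3$, so $\mathfrak S(n)\gg 1$ for all $s\ge 3$ (Lemma \ref{lemma3.1}); the restriction $c>2$ has nothing to do with $\mathfrak S(n)$. The equation $2c=2+\log(5c-1)$ arises instead from the minor/intermediate arc analysis: the $5$ is the reciprocal of the exponent in Vinogradov's bound $g(\alpha)\ll n^{4/5+\varepsilon}$ on arcs of height $n^{2/5}$ (under GRH this becomes $n^{3/4}$, whence $\theta=4$ and the constant $c'$), and the optimisation is over the pair of moments $\int|f|^s$ and $\int|f|^{s+t}$ with Wooley's admissible exponents $\Delta_u=k\Eta(u/k)$, leading to the condition $t/s+\theta\Delta_{s+t}/k<1$ of \eqref{5.5}. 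Your explanation of the ``$5$'' and of the additive ``$+4$'' does not reflect this mechanism.

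Second, your minor-arc scheme --- H\"older with one block estimated by $\sup_{\mathfrak m}|h(\alpha)|\ll P^{1-\tau+\varepsilon}$, one block by mean values, and a hybrid block $\int|g|^2|h|^{2j}$ controlled by prime-pair counts --- cannot reach $s$ linear in $k$. On arcs of height a fixed power of $n$ the available pointwise saving for (smooth) Weyl sums is only $\tau\asymp 1/k$, so to cover the deficit of a fixed power of $n$ (e.g.\ the $n^{1/5}$ gap between $|g|\ll n^{4/5}$ and the trivial bound) one would need $\gg k^2$ factors in the sup-bound block; and the hybrid moment $\int_0^1|g|^2|h|^{2j}\,\mathrm d\alpha$ is dominated by its off-diagonal term of size about $nP^{2j}$, which after H\"older returns you essentially to the square-root loss you set out to avoid. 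The paper circumvents both problems with (i) the level-set (``pruning by size'') argument: dissect $\mathfrak k$ into sets where $|g(\alpha)|\asymp n/U$, use $\int_{\mathscr L(U)}|g|\le (U/n)\int_0^1|g|^2\ll UL$ where $|f|$ is small, and compare the $s$-th and $(s+t)$-th moments of $f$ where $|f|$ is large, so that no pointwise bound for $f$ is ever needed; and (ii) the new major-arc mean value estimate of Lemma \ref{lemma2.3}, $\int_{\mathfrak M(Q)}|f|^t\ll P^{t-k+\varepsilon}Q^{2\Delta_t/k}$, which drives the pruning over the intermediate range $P^{1/5}\le Q\le n^{2/5}$ where your proposed approximation of $h$ by $q^{-1}S(q,a)v(\alpha-a/q)$ is not adequate. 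Without substitutes for these two ingredients the proposed argument would only recover a bound of the shape $s\gg k\log k$, not $s\ge ck+4$.
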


For small values of $k$ this conclusion is susceptible to some improvement because our 
proof is tuned to perform optimally for very large $k$. Based on the methods of 
\cite{V89, VW1994, VW,W3}, the naive decoupling approach yields \eqref{1.2} for the pairs 
$(k,s)=(4,6)$, $(5,9)$, $(6,12)$ and $(7,16)$, for example. Our method yields improved 
results for $k\ge 6$.

\begin{theorem}\label{thm1.2}
Suppose that $6\le k\le 20$ and $s\ge S_0(k)$, where $S_0(k)$ is determined according to 
the entries of Table \ref{tab1}. Then whenever $s\ge S_0(k)$, one has 
$r_{k,s}(n) \gg n^{s/k}/\log n$.
\end{theorem}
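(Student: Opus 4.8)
The plan is to establish Theorem~\ref{thm1.2} by the same circle method architecture that proves Theorem~\ref{thm1.1}, but with the parameters tuned numerically for each individual $k$ in the range $6\le k\le 20$ rather than optimised asymptotically. Write $f(\alp)=\sum_{x\le P}e(\alp x^k)$ for the classical Weyl sum and $g(\alp)=\sum_{x\le P}\ell(x)e(\alp x^k)$ for the smooth-weighted (or von~Mangoldt-weighted, as needed) variant over smooth numbers, with $P=n^{1/k}$, and let $h(\alp)=\sum_{p\le n}(\log p)e(\alp p)$ be the prime generating function. The representation count $r_{k,s}(n)$ is $\int_0^1 h(\alp)F(\alp)^s e(-n\alp)\,\d\alp$ for a suitable choice of $k$-th power generating function $F$, which for the smaller exponents we take to be the unweighted $f$, and for larger exponents a smooth Weyl sum so as to exploit the sharper mean value estimates of Lemma~\ref{lemma2.2}. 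First I would dissect the unit interval into major arcs $\grM$ and minor arcs $\grm$ in the standard way; on $\grM$ the singular series and singular integral produce the main term $\gg n^{s/k}/\log n$, exactly as in the proof of Theorem~\ref{thm1.1}, and this part is insensitive to $k$ once $s$ is large enough for absolute convergence of the singular series, which holds throughout the tabulated range.

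The substance is the minor arc bound, and here the plan is to decouple the prime from the $k$-th powers by Schwarz's (or Hölder's) inequality, writing
\[
\int_{\grm}|h(\alp)F(\alp)^s|\,\d\alp \le \Bigl(\sup_{\alp\in\grm}|F(\alp)|\Bigr)^{s-2t}\Bigl(\int_0^1|h(\alp)|^2\,\d\alp\Bigr)^{1/2}\Bigl(\int_0^1|F(\alp)|^{4t}\,\d\alp\Bigr)^{1/2},
\]
for an integer parameter $t$ to be chosen, after which $\int_0^1|h|^2\,\d\alp\ll n\log n$ by the prime number theorem and $\int_0^1|F|^{4t}\,\d\alp$ is controlled by the relevant mean value estimate. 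For $F=f$ in the cases $k=6,7$ I would feed in the Vinogradov-type and Vaughan--Wooley estimates referenced in the paper (\cite{V89, VW1994, VW, W3}); for larger $k$ I would use $F=g$ together with Lemma~\ref{lemma2.2}. The minor arc Weyl estimate $\sup_{\grm}|F(\alp)|\ll P^{1-\sig(k)+\eps}$ then has to beat the deficit, and one needs
\[
(1-\sig(k))(s-2t) + \tfrac12\cdot(\text{exponent in the }4t\text{th moment}) + 1 < s/k,
\]
with a logarithmic saving to spare; optimising the pair $(t,\sig)$ against the available moment inequality for each $k$ yields the threshold $S_0(k)$ recorded in Table~\ref{tab1}. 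Each row of the table is thus a finite computation: pick the moment estimate, pick $t$ minimising the right-hand side, read off the least admissible $s$.

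The main obstacle is purely bookkeeping rather than conceptual: one must track the $\eps$'s and logarithmic factors carefully enough that the minor arc contribution is genuinely $o(n^{s/k}/\log n)$ and not merely $O(n^{s/k})$, since the target lower bound carries the $\log n$ in the denominator and the prime sum contributes a $\log n$ that must be beaten on the minor arcs. Concretely, after Schwarz one has an extra $(\log n)^{1/2}$ from $\int|h|^2$, so the Weyl saving $P^{-\sig(k)(s-2t)}$ must dominate a fixed power of $\log n$; this is automatic once the power-of-$P$ inequality above is strict, but it forces $s\ge S_0(k)$ rather than $s\ge S_0(k)-1$ in the borderline rows. A secondary point, relevant for the smaller exponents where $F=f$ rather than a smooth Weyl sum, is that the fourth-moment or sixth-moment estimates for $f$ that give the best $S_0(k)$ are the Hua-type and Vaughan--Wooley bounds rather than the smooth-number estimates of Lemma~\ref{lemma2.2}; choosing the right estimate for each $k$ (classical versus smooth, and which $t$) is what makes the table non-uniform. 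No new analytic input beyond what the paper already assembles for Theorem~\ref{thm1.1} is required; Theorem~\ref{thm1.2} is the specialisation of that machinery to a finite list of exponents with the parameters chosen by hand.
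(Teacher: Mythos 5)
Your proposal is the ``naive decoupling'' argument that the paper explicitly sets up as the benchmark to be beaten, and it cannot reach the thresholds $S_0(k)$ of Table \ref{tab1}. After Schwarz you pay $\bigl(\int_0^1|h|^2\bigr)^{1/2}\approx (n\log n)^{1/2}$ for the prime sum, and this square-root loss must then be recovered entirely from the pointwise Weyl-type saving on the minor arcs together with an even moment of the $k$-th power generating function. As the introduction of the paper records, optimising exactly this scheme over the available estimates (classical or smooth, with the best choice of moment) yields \eqref{1.2} only for $(k,s)=(6,12)$, $(7,16)$, etc., whereas Theorem \ref{thm1.2} claims $S_0(6)=11$ and $S_0(7)=13$; so each row of your ``finite computation'' lands strictly above the tabulated value for every $k\ge 6$. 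The missing input is not bookkeeping but the two innovations of the paper. First, the level-set (``pruning by size'') argument of Sections 6--7: instead of decoupling by Schwarz, one dissects the minor arcs into sets $\mathscr L(U)$ on which $|g(\alpha)|\asymp n/U$, notes via Chebyshev that $\int_{\mathscr L(U)}|g|\ll UL$, and then splits according to whether $|f(\alpha)|^s$ is below or above $P^sU^{-1}L^{-3}$; the small-$f$ part is handled by the $L^1$ bound on $g$, and the large-$f$ part by an $(s+t)$-th moment of $f$ combined with the pointwise bound $|g|\le 2n/U$ and the estimate \eqref{4.1} forcing large $|g|$ onto major arcs. This replaces the $n^{1/2}$ loss by the far milder condition $t/s+\theta\Delta_{s+t}/k<1$ of \eqref{5.5}. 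Second, the major-arc mean value estimate of Lemma \ref{lemma2.3} (``pruning by height''), which controls moments of smooth Weyl sums over the wide arcs $\mathfrak M(Q)$ and is needed to run the same comparison on the intermediate arcs $\mathfrak K\setminus\mathfrak L$.

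The paper's actual proof of Theorem \ref{thm1.2} is then a specialisation of Theorem \ref{thm8.1}: for each $6\le k\le 20$ one chooses $(s_5,t_5)$ as in Table \ref{tab2}, converts the permissible exponents $\lambda_u$ from the appendices of \cite{VW} and \cite{VW2000} into admissible exponents $\Delta_{s_5}$ and $\Delta_{s_5+t_5}$ (interpolating by H\"older for odd indices), and checks numerically that $2\Delta_{s_5}<k$ and $\Omega_5=t_5/s_5+5\Delta_{s_5+t_5}/k<1$. If you want to salvage your write-up, you should replace the Schwarz decoupling step by the verification of \eqref{5.5} and an appeal to Lemmata \ref{lemma2.3}, \ref{lemma3.1}, \ref{lemma4.3} and \ref{lemma5.1}; the table-by-table computation you envisage is indeed how the proof ends, but it is performed on the conditions \eqref{5.5}, not on a decoupled minor-arc inequality.
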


\begin{table}[h]
\begin{center}
\begin{tabular}{ccccccccccccccccc}
\toprule
$k$ & $5$ & $6$ & $7$ & $8$ & $9$ & $10$ & $11$ & $12$ & $13$ & $14$ & $15$ & $16$ & $17$ & $18$ & $19$ & $20$\\
$S_0(k)$ & & $11$ & $13$ & $16$ & $18$ & $20$ & $22$ & $24$ & $27$ & $29$ & $31$ & $33$ & $35$ & $37$ & $40$ & $42$\\
$S_1(k)$ & $8$ & $10$ & $12$ & $14$ & $17$ & $19$ & $21$ & $23$ & $25$ & $27$ & $29$ & $31$ & $32$ & $34$ & $36$ & $38$\\
\bottomrule
\end{tabular}\\[6pt]
\end{center}
\caption{Critical numbers of $k$-th powers for Theorems \ref{thm1.2} and 
\ref{thm1.4}}\label{tab1}
\end{table}

Our approach to equation \eqref{1.1} involves the circle method, and we are therefore 
limited by the familiar square root cancellation barrier. For the case at hand this says that 
the range $s\le k$ is outside the scope of the method unless one is able to explore 
cancellations that get lost after application of the triangle inequality. Thus, we require 
scarcely more than twice as many $k$-th powers relative to the limit of the method, and for 
small $k$ even fewer.\par

Further progress with problems involving primes is often possible if one assumes that no 
Dirichlet $L$-function has a zero in the half-plane $\text{Re}(z)>\frac 12$. This is the 
generalised Riemann hypothesis, abbreviated to GRH hereafter (and referred to by Hardy 
and Littlewood \cite{pn3} as Hypothesis R$^*$). Hooley's original work \cite{HooActa} on 
$r_{2,2}(n)$ initially depended on GRH, and in a joint effort with Kawada \cite{ARTS6} the 
authors deduced from GRH the existence of infinitely many primes  representable as the 
sum of $2\lceil 4k/3\rceil$ positive integral $k$-th powers. The method of \cite{ARTS6} is 
readily adapted to establish the lower bound \eqref{1.2} for $s\ge 2\lceil 4k/3\rceil$, 
subject to GRH, but this is now superseded unconditionally by Theorem \ref{thm1.1}. 
However, assuming the truth of GRH we are able to relax the conditions on $s$ in 
Theorems \ref{thm1.1} and \ref{thm1.2}. In particular, we improve on the naive decoupling 
approach for fifth powers. Our results feature the unique solution of the transcendental 
equation
\[
2c'=2+\log (4c'-1)
\]
in the interval $[1,\infty)$. Its decimal representation is $c'= 1.961969\ldots$.
    
\begin{theorem}\label{thm1.3}
Should GRH be true, then when $k\in \mathbb N$ and $s\ge c'k +4$, one has 
$r_{k,s}(n)\gg n^{s/k}/\log n$.
\end{theorem}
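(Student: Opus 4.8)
The plan is to run the Hardy--Littlewood method with the $k$-th power variables restricted to smooth numbers, so that the mean value of Lemma~\ref{lemma2.2} is available. I would put $P=(n/2s)^{1/k}$, fix a small $\eta>0$, set $R=P^{\eta}$, write $\calA=\calA(P,R)$ for the set of $R$-smooth natural numbers not exceeding $P$, and introduce
\[
h(\alpha)=\sum_{x\in\calA}e(\alpha x^k),\qquad g(\alpha)=\sum_{n/2<p\le n}(\log p)\,e(\alpha p).
\]
Every tuple $(x_1,\dots,x_s)\in\calA^s$ contributing to the generating integral $\int_0^1 g(\alpha)h(\alpha)^se(-\alpha n)\,\d\alpha$ satisfies $x_1^k+\dots+x_s^k\le n/2$, so the corresponding $p=n-x_1^k-\dots-x_s^k$ exceeds $n/2$, and hence
\[
\int_0^1 g(\alpha)h(\alpha)^se(-\alpha n)\,\d\alpha\le(\log n)\,r_{k,s}(n).
\]
It therefore suffices to prove that this integral is $\gg n^{s/k}$, noting that $P^s\asymp n^{s/k}$. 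Throughout I would split the hypothesis $s\ge c'k+4$ as $s=4+s_1+s_2$, with four variables reserved for the major-arc main term, $s_1\ge 2$ more assigned to an intermediate range of arcs, and $s_2$ left for the genuine minor arcs.

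First I would set up the dissection. Fix $C$ large and let $\grN$ be the narrow major arcs consisting of all $\alpha=a/q+\beta$ with $1\le a\le q\le(\log n)^{C}$, $(a,q)=1$ and $|\beta|\le(\log n)^{C}/(qn)$. On $\grN$ I would insert the standard major-arc approximation to the smooth Weyl sum $h$, together with the Siegel--Walfisz approximation $g(\alpha)=\phi(q)^{-1}\mu(q)u(\beta)+O\bigl(n(\log n)^{-A}\bigr)$ for every fixed $A$, where $u(\beta)=\sum_{n/2<m\le n}e(\beta m)$. The ensuing elementary computation delivers the expected main term $\grS(n)\grJ(n)+o(n^{s/k})$ for the contribution of $\grN$, in which the singular integral obeys $\grJ(n)\asymp n^{s/k}$ and the singular series $\grS(n)$ converges absolutely and is bounded below uniformly in $n$ once $s$ exceeds a small absolute constant, the $p$-adic solution densities of \eqref{1.1} being positive at every prime throughout our range of $s$. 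Hence $\grN$ contributes $\gg n^{s/k}$, and the whole problem reduces to bounding the contribution of the complementary arcs by $o(n^{s/k})$.

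It is on these complementary arcs that GRH enters, and from which Theorem~\ref{thm1.3} improves on Theorem~\ref{thm1.1}. I would introduce an intermediate set $\grK$ of arcs arising from Dirichlet approximations $a/q$ with $(\log n)^{C}<q\le Q_1$ (where $Q_1$ is a small fixed power of $n$) and $|\beta|$ below a suitable threshold. On $\grK$, GRH yields $g(\alpha)=\phi(q)^{-1}\mu(q)u(\beta)+O(n^{1/2+\eta})$, whence $|g(\alpha)|\ll q^{-1+\eta}\min\{n,|\beta|^{-1}\}+n^{1/2+\eta}$; unconditionally one has only $g(\alpha)=o(n)$ outside $\grN$, so this genuine power saving in $q$ is the additional leverage GRH supplies. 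Feeding this bound, together with the major-arc descriptor of $h$ on $\grK$, into $\int_\grK|g||h|^s$ and summing over $q$ and $\beta$ gives $\int_\grK|g(\alpha)||h(\alpha)|^s\,\d\alpha\ll n^{s/k}(\log n)^{-1}$, the $s_1$ spare $k$-th powers being just enough to absorb the divergence of $\sum_q q^{-1}$ once the factor $q^{-1+\eta}$ is in hand. On the genuine minor arcs $\grm=[0,1)\setminus(\grN\cup\grK)$, every $\alpha$ has a Dirichlet approximation with denominator exceeding $Q_1$, so the Weyl-type inequality for smooth Weyl sums gives $\sup_{\grm}|h(\alpha)|\ll P^{1-\sigma+\eta}$ for a suitable $\sigma=\sigma(k)>0$, while the Vinogradov estimate supplies $\sup_{\grm}|g(\alpha)|\ll n(\log n)^{-A}$ for every fixed $A$. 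I would then combine these pointwise bounds with the mean value of Lemma~\ref{lemma2.2}, the simplest instance of the argument being
\[
\int_\grm|g||h|^s\,\d\alpha\le\Bigl(\sup_\grm|g|\Bigr)\Bigl(\sup_\grm|h|\Bigr)^{s-2t}\int_0^1|h(\alpha)|^{2t}\,\d\alpha,
\]
where $\int_0^1|h(\alpha)|^{2t}\d\alpha\ll P^{2t-k+\Delta_t+\eta}$ with $\Delta_t$ the differenced exponent of Lemma~\ref{lemma2.2}, and $t$ is chosen to balance the loss $(P^{1-\sigma})^{s-2t}$ against $\Delta_t$; taking $A$ sufficiently large then renders this $o(n^{s/k})$ once $s$ exceeds a threshold linear in $k$.

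The hard part will be this final optimisation. The exponents $\Delta_t$ in Lemma~\ref{lemma2.2} decay exponentially once $t$ passes a fixed multiple of $k$, and balancing them against the minor-arc exponent $\sigma$ for smooth Weyl sums and against the GRH-enhanced control of the prime sum on $\grK$, one finds that the least admissible $s$ is $(c'+o(1))k$, with $c'$ the root of the stationarity relation $2c'=2+\log(4c'-1)$; the coefficient $4$ here, in place of the $5$ appearing for Theorem~\ref{thm1.1}, is exactly the effect of upgrading the unconditional prime-sum input on the intermediate arcs to its GRH-strength counterpart, and the additive constant $4$ accounts for the variables spent on the main term, on the singular series and integral of $\grN$, and on sterilising $\grK$. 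After choosing the free parameters $\eta$, $R$, $Q_1$, $t$ and the widths of $\grN$ and $\grK$ so that all three regimes close simultaneously, one assembles the contributions of $\grN$, $\grK$ and $\grm$ to conclude.
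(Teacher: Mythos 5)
There is a genuine gap, and it lies exactly where you say the ``hard part'' is. Your minor-arc treatment decouples the prime by a sup-norm bound on $g$, a sup-norm (Weyl-type) bound on the smooth sum, and the full-interval mean value of Lemma \ref{lemma2.2}: you need $\Delta_{2t}<\sigma(s-2t)+(\text{power saving in }g)/1$, and the only $k$-dependent saving from the smooth Weyl sum on minor arcs is $\sigma(k)\asymp 1/k$ (up to logarithms). Since $s-2t\ll k$ when $s$ is linear in $k$, the quantity $\sigma(s-2t)$ is bounded, so you would need $\Delta_{2t}=k\Eta(2t/k)=O(1)$, which by \eqref{2.1} forces $t\gg k\log k$. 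In other words, your scheme reproduces the classical barrier $s_0(k)=\tfrac12 k(\log k+\log\log k+2+o(1))$ discussed in the introduction; the claim that the $\Delta_t$ ``decay exponentially once $t$ passes a fixed multiple of $k$'' and that the balance lands at $s\ge(c'+o(1))k$ with $2c'=2+\log(4c'-1)$ does not follow from any estimate you have put on the table. In the paper the constant $c'$ arises from a different mechanism altogether: a level-set decomposition of $|g(\alpha)|$ (``pruning by size'', Section 6) in which the measure of the set where $|f|$ is large but not very large is controlled by comparing moments of two different orders $s$ and $s+t$, and a new mean value for smooth Weyl sums over major arcs of height up to $\sqrt n$ (Lemma \ref{lemma2.3}, ``pruning by height'', Section 7). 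These give the criterion \eqref{5.5}, and the optimisation of $\tau/\sigma+\theta\Eta(\sigma+\tau)$ in Section 8, with $\theta=4$ under GRH, is what produces $2c'=2+\log(4c'-1)$. Without Lemma \ref{lemma2.3} or the large-values argument, no choice of your parameters $t$, $Q_1$, $\sigma$ closes the gap at $s\asymp k$.

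Two further points. First, your GRH input is overstated: the uniform consequence of GRH for the prime exponential sum is \eqref{4.2}, namely $g(\alpha)\ll(n\Upsilon(\alpha)^{1/2}+n^{3/4})L^2$, not an error $O(n^{1/2+\eta})$ valid throughout the intermediate arcs; the $\sqrt{nq}$ term together with the factor $1+n|\beta|$ in \eqref{4.4} produces the $n^{3/4}$ floor, and it is precisely this exponent $3/4$ (equivalently $U_0=n^{1/4}$, $\theta=4$) that determines $c'$. If $O(n^{1/2+\eta})$ were available one would obtain a better constant than $c'$. Second, on your intermediate arcs $\grK$ you invoke ``the major-arc descriptor of $h$'': pointwise approximations to the smooth Weyl sum with acceptable errors are only available for heights up to a small power of $P$ (and even there only with the exponent $1/(2k)$ of Lemma \ref{lemma4.1}), so the assertion that $s_1\ge 2$ spare variables suffice there is unsupported; handling denominators between $(\log n)^{C}$ and $\sqrt n$ is exactly the role of Lemma \ref{lemma2.3} and of the pruning in Sections 4 and 7.
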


\begin{theorem}\label{thm1.4}
Suppose that $5\le k\le 20$ and $s\ge S_1(k)$, where $S_1(k)$ is determined according to 
the entries of Table \ref{tab1}. Then, should GRH be true and $s\ge S_1(k)$, one has 
$r_{k,s}(n) \gg n^{s/k}/\log n$.
\end{theorem}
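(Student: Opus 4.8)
The plan is to establish Theorems \ref{thm1.3} and \ref{thm1.4} by the Hardy–Littlewood method, running the circle method with a generating function that couples the prime variable to a system of smooth $k$-th power Weyl sums. Writing $f(\alp)=\sum_{p\le n}e(\alp p)$ for the exponential sum over primes and $g(\alp)$ for a smooth Weyl sum $\sum_{x\in\calA(P,P^\eta)}e(\alp x^k)$ over $P$-smooth numbers of size $P=n^{1/k}$, the representation count $r_{k,s}(n)$ is bounded below by the integral $\int_0^1 f(\alp)g(\alp)^s e(-n\alp)\,\d\alp$, up to an adjustment counting only $x_j$ drawn from the smooth set (which costs only a constant factor in the lower bound, by standard smooth-number density considerations). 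First I would set up the usual dissection into major arcs $\grM$ and minor arcs $\grm$. On $\grM$, I would invoke GRH to control $f(\alp)$: this is precisely the point where the generalised Riemann hypothesis enters, replacing the Siegel–Walfisz bound (which is all that is available unconditionally, and which forces the wider major arcs and weaker error terms that the unconditional argument in Theorems \ref{thm1.1}, \ref{thm1.2} has to accommodate). Under GRH one gets a clean asymptotic for $f$ on fairly wide major arcs with a power-saving error, and combined with the classical singular integral and singular series analysis for $g^s$ this yields the main term of size $\gg n^{s/k}/\log n$ — here the arithmetic input is that the singular series $\grS(n)$ is bounded away from zero, which holds since $n-x_1^k-\dots-x_s^k$ ranges over residues freely once $s\ge 2$ and the relevant local densities are positive.

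The heart of the matter is the minor-arc estimate. On $\grm$, I would decouple the prime from the $k$-th powers by a single application of Schwarz's (or rather Hölder's) inequality: $\bigl|\int_\grm f g^s e(-n\alp)\,\d\alp\bigr|$ is bounded by $\bigl(\sup_{\grm}|g|\bigr)^{s-2t}$ times $\bigl(\int_0^1|f|^2\,\d\alp\bigr)^{1/2}$ times $\bigl(\int_0^1|g|^{4t}\,\d\alp\bigr)^{1/2}$ for a suitable integer (or rational) parameter $t$, using the prime number theorem $\int_0^1|f|^2\,\d\alp\ll n\log n$ for the prime factor. The two ingredients to balance are then the pointwise minor-arc bound for the smooth Weyl sum $g$ — of Weyl-type strength $\sup_\grm|g|\ll P^{1-\sig(k)+\eps}$, coming from \cite{V89, VW1994, VW, W3} for small $k$ and from the efficient-congruencing/decoupling technology for general $k$ — and the mean value estimate $\int_0^1|g|^{2u}\,\d\alp\ll P^{2u-k+\Del_u+\eps}$ from Lemma \ref{lemma2.2}. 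The arithmetic of the constant $c'$ and the equation $2c'=2+\log(4c'-1)$ should emerge exactly from optimising the exponent in the mean value estimate against the number of variables: the large-$k$ asymptotics of the permissible exponents in Lemma \ref{lemma2.2} have a logarithmic shape, and the "$4$" in $\log(4c'-1)$ (versus "$5$" in the unconditional $\log(5c-1)$) reflects that GRH allows one to dispense with roughly one fewer $k$-th power in the decoupling step — morally because the wider major arcs in the unconditional case must be compensated by an extra variable absorbed into the minor-arc mean value. For Theorem \ref{thm1.4} (and the companion entries $S_1(k)$ in Table \ref{tab1}), I would not use the asymptotic form of the mean value estimates but instead feed in the numerically optimal permissible exponents $\Del_u$ for each $k$ in the stated range, together with the best known Weyl exponents $\sig(k)$, and simply record the least $s$ for which the minor-arc contribution is $o(n^{s/k}/\log n)$; this is a finite computation for $5\le k\le 20$.

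The main obstacle I anticipate is not any single estimate but the bookkeeping required to make the minor-arc bound beat the main term by a genuine power of $n$ while keeping the count of $k$-th powers down to $c'k+4$. Concretely, after the Hölder split one needs $(1-\sig(k))(s-2t) + (\text{exponent from }|g|^{4t}\text{ mean value}) < s - k$, and the slack here is thin — the "$+4$" in the hypothesis $s\ge c'k+4$ is exactly the margin needed to absorb the $\eps$'s, the $\log n$ from $\int|f|^2$, the loss from restricting to smooth numbers, and the fact that the Weyl exponent $\sig(k)$ is only of size $\gg (k^2\log k)^{-1}$ or so for large $k$, which is why the linear-in-$k$ count cannot be pushed below the $c'k$ threshold by this method. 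A secondary technical point is the treatment of the "near-major" arcs where $g$ is large but $f$ is not yet well-approximated: one typically needs a pruning argument, iterating the major-arc approximation for $g$ on a nested sequence of arcs and using the $2$nd moment of $f$ at each stage, and I would handle this exactly as in \cite{VW,W3}. Finally I would remark that Theorem \ref{thm1.3} is the $k\to\infty$ face of Theorem \ref{thm1.4}: both come from the same inequality, with the transcendental equation arising as the break-even point of the asymptotic mean value exponents, so the two proofs can be organised in parallel.
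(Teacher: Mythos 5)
Your outline of the major arcs is fine, but the minor-arc treatment you propose is exactly the ``naive decoupling'' that this paper is written to supersede, and it cannot reach the ranges $s\ge S_1(k)$ of Table \ref{tab1}. After Cauchy--Schwarz with the second moment of the prime exponential sum, which is $\ll n\log n=P^{k}\log n$, the other factor is $\bigl(\int_0^1|f(\alpha)|^{2s}\,\mathrm d\alpha\bigr)^{1/2}\ll P^{s-k/2+\Delta_{2s}/2+\eps}$ (here $f$ is the smooth Weyl sum, the paper's notation; you have $f$ and $g$ interchanged), so the $P^{k/2}$ from the prime moment exactly cancels the $k/2$ saving and everything hinges on beating $P^{\Delta_{2s}/2}$ by pointwise Weyl savings $\bigl(\sup|f|\bigr)^{\cdot}\ll P^{-\sig(k)\cdot(\cdot)}$. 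Since the admissible exponents decay only like $k\Eta(2s/k)\asymp k\,{\mathrm e}^{1-2s/k}$ while $\sig(k)$ is of size about $1/(k\log k)$, this forces $s\gg k\log k$: it reproduces precisely the bound $s_0(k)=\tfrac12 k(\log k+\log\log k+2+o(1))$ quoted in the introduction, and for small $k$ the values $(5,9)$, $(6,12)$, $(7,16)$ mentioned there, which lie strictly above the table entries $S_1(5)=8$, $S_1(6)=10$, $S_1(7)=12$. So the ``finite computation'' you describe would verify a weaker statement, not Theorem \ref{thm1.4}; likewise no choice of parameters in your scheme produces the threshold $c'k+4$ or the equation $2c'=2+\log(4c'-1)$, whose constant $4$ does not come from saving a variable in the decoupling step.

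What the paper actually does is different in two essential respects. First, GRH is used only through the pointwise bound \eqref{4.2}, $g(\alpha)\ll(n\Upsilon(\alpha)^{1/2}+n^{3/4})L^2$ for the prime sum, which lowers the height beyond which the prime sum is guaranteed small from $n^{2/5}$ (unconditional, via \eqref{4.1}) to $\sqrt n$; this is the sole source of the switch from $\theta=5$ to $\theta=4$ in \eqref{5.5}. Second, the minor and intermediate arcs are handled not by decoupling but by two new devices: the major-arc mean value estimate of Lemma \ref{lemma2.3}, $\int_{\mathfrak M(Q)}|f|^{t}\,\mathrm d\alpha\ll P^{t-k+\eps}Q^{2\Delta_t/k}$ for arcs of arbitrary height $Q$ (pruning by height), and the large-values argument of Sections 6 and 7 (pruning by size), in which one dissects into level sets $\mathscr L(U)$ according to the size of the prime sum, bounds its first moment on such a set by $UL$ via orthogonality and Chebyshev, and on the portion where $|f|$ is also large compares the $s$-th moment of $f$ with its $(s+t)$-th moment, restricted to $\mathfrak M(2Q)$ when pruning by height. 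The outcome is Lemma \ref{lemma5.1} and Theorem \ref{thm8.1}, reducing everything to the two conditions \eqref{5.5}, namely $2\Delta_s<k$ and $t/s+\theta\Delta_{s+t}/k<1$; Theorem \ref{thm1.4} then follows by checking these with $\theta=4$ for the explicit admissible exponents recorded in Table \ref{tab2} (drawn from the Vaughan--Wooley tables), and $c'$ arises in Theorem \ref{thm1.3} from minimising $\tau/\sigma+4\Eta(\sigma+\tau)$. Without Lemma \ref{lemma2.3} and the level-set comparison, your argument has no mechanism to close this gap.
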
 

It should be noted that Theorems \ref{thm1.3} and \ref{thm1.4} depend on GRH only in 
the most indirect way, through the exponential sum estimate \eqref{4.2}. In fact, the bound 
\eqref{4.2} is also available for a type II exponential sum with both variables of summation 
near $\sqrt n$. The unconditional bound \eqref{4.1} requires estimates for type II sums 
with the shorter of the two variables of summation ranging over $[n^{2/5},n^{1/2}]$. If 
one waives Vaughan's identity and works with type II sums directly on the level of 
Diophantine equations, then prime detecting sieves typically narrow the range for type II 
sums. This should lead to an  improvement of Theorem \ref{thm1.1}, and perhaps one can 
handle the case $k=5$, $s=8$ unconditionally in this way. The extra complications, however, 
would disguise the simplicity of new elements that we introduce to the circle method here, 
and we therefore refrain from elaborating on this idea in this paper.\par

Our methods apply equally well to questions concerning primes representable as sums of 
positive integral $k$-th powers. If $R(N)=R_{k,s}(N)$ denotes the number of solutions of 
the equation 
\begin{equation}\label{1.3}
p=x_1^k + \ldots + x_s^k
\end{equation}
in primes $p\le N$ and natural numbers $x_j$, then subject to the conditions in Theorems 
\ref{thm1.1}, \ref{thm1.2}, \ref{thm1.3} or \ref{thm1.4}, it can be shewn that 
$R(N)\gg N^{s/k}(\log N)^{-1}$. Hardy and Littlewood \cite{pn3} referred to this class of 
problems as conjugate to those defined by equation \eqref{1.1}, and made them the 
subject of their conjectures M and N. It should be said, though, that the equation 
\eqref{1.3} seems to be somewhat easier than its counterpart \eqref{1.1}. It has been 
known for centuries that primes of the form $4l+1$ are the sum of two squares, yet the 
conjugate problem concerning the numbers that are the sum of two squares and a prime 
was solved by the generation preceding us. More recent developments with prime 
detecting sieves led to remarkable progress in this area. Friedlander and Iwaniec \cite{FI} 
showed that the polynomial $x^2+y^4$ captures its primes. This should be viewed as a 
result concerning sums of two squares in which one of the variables is itself a square. Soon 
afterwards Heath-Brown \cite{HB} found infinitely many primes of the form $x^3+2y^3$, 
thereby confirming conjecture N of Hardy and Littlewood \cite{pn3}. Most recently of all, 
Maynard \cite{May2020} has considered primes represented by more general incomplete 
norm forms. These spectacular results depend, in some way or other, on the homogeneity 
of the polynomial on the right hand side of \eqref{1.3}. It seems that there are significant 
obstacles preventing us from extracting lower bounds for $r_{3,3}(n)$, for example, along 
these lines.\par

The success of our approach depends on two innovations. The first is a new mean value 
estimate for moments of smooth Weyl sums over major arcs. Very recently, Liu and Zhao 
\cite{LZ} obtained such estimates. Their method rests on the large sieve, and in 
consequence the width of an individual arc centered at a Farey fraction should be as small 
as $1/n$ (normalised for applications to equation \eqref{1.1}). Any inflation of this width 
implies an eternal loss, and this is typically not tolerable. In their work, the use of weights 
and the Poisson summation formula makes it possible to control losses, but in applications 
with primes, for example, this does not seem possible. In Lemma \ref{lemma2.3} below we 
describe a result in which the major arcs have their natural shape, and are therefore much 
wider than $1/n$ if the denominator of the Farey fraction at the center is small. 
Nonetheless, our estimate performs just as well as one can expect from 
\cite[Lemma 5.6]{LZ}, but it is easier to use, and in some cases, and in particular in the 
situation considered in this paper, the wider arcs are essential for the success of the 
method. In contrast to \cite[Lemma 5.6]{LZ}, our result neither depends on the the large 
sieve, nor makes reference to Diophantine equations. Thus, again in contrast to the work of 
Liu and Zhao, we are able to handle fractional moments with ease, and we work with 
ordinary smooth Weyl sums, avoiding preseeded primes lying in certain intervals. This is 
important if one wishes to import results depending on breaking convexity devices. 
Therefore, our approach offers several advantages and extra flexibility. The new lemma has 
applications well beyond those presented in this paper. In fact, the argument leading to 
Lemma \ref{lemma2.3} is very direct and simple in spirit. As we shall demonstrate 
elsewhere, the ideas underpinning the proof can be further developed, and we defer to this 
future occasion a detailed account of the potential of the method.\par

Our new major arc mean value estimates provide a versatile pruning device. As we shall 
see in Section 5, the bounds provided by Lemma \ref{lemma2.3} are of strength sufficient 
to establish a version of Theorem \ref{thm1.1} with an inflated value of $c$. We enhance 
the power of the new method with our second innovation, a novel large values technique. 
Drawing inspiration from an argument that occurs {\em en passant} in the first author's 
work on a certain quaternary additive problem \cite{B-Kac}, we explore the consequences 
of the stipulation that a Weyl sum is large, but not very large, through a comparison of 
various moments. This new method may be viewed as a pruning device for the minor arcs, 
and it transpires that in certain cases, it is possible to improve Weyl type bounds in mean 
over sets that we expect to be small. In applications of the circle method, this works just as 
if the Weyl bound would be better than currently known. Since the method ultimately rests 
on mean values, it also cooperates with the new major arc means, and then also helps with 
the more classical aspects of pruning on major arcs. The technical aspects of our new 
devices will be explained in the course of the argument, once the notational apparatus has 
been introduced. We refer the reader to the final part of Section 5, and the proof of Lemma 
\ref{lemma5.1} below, for details.\par

This paper is organised as follows. We begin with a discussion of the major arc mean values 
in Section 2. In Sections 3 and 4 we evaluate the major arcs in a circle method approach to 
the counting function $r_{k,s}$. This is largely standard. Then, in Section 5, we highlight the 
potential of major arc mean values for pruning. This section ends with the statement of 
Lemma \ref{lemma5.1} that in turn is dependent on the new large values technique. In 
Section 6 we present this as a pruning device on minor arcs, and in Section 7 in a catalytic 
role to enhance classical pruning. Having made the necessary preparations, the proof of our 
main theorems is presented in Section 8.

\section{Smooth Weyl sums and their means} In this section we discuss certain mean value 
estimates for smooth Weyl sums. Our discussion prominently features a certain 
transcendental function, which we now define. The function 
$\ome\colon (0,\infty)\rightarrow (0,\mathrm e)$, defined by 
$t\mapsto {\mathrm e}^{1-t}$, is a strictly decreasing bijection, while the function 
$\Ome\colon (0,1)\rightarrow (0,\mathrm e)$, defined by $u\mapsto u\,{\mathrm e}^u$, is 
a strictly increasing bijection. It follows that the equation 
$\Eta\, {\mathrm e}^\Eta = {\mathrm e}^{1-t}$ 
defines a smooth, strictly decreasing bijective function $\Eta: (0,\infty)\to (0,1)$. We then 
have
\begin{equation}\label{2.1}
\Eta(t)+\log \Eta(t)=1-t,
\end{equation}
and we may differentiate to infer that the relation
\begin{equation}\label{2.2}
\Eta'(t)=-\Eta(t)/(1+\Eta(t)) 
\end{equation}
holds for all $t>0$. Later we require the following simple inequality.

\begin{lemma}\label{lemma2.1}
If $1\le t\le 3$, then $\Eta(t)> 1/(4t-1)$.
\end{lemma}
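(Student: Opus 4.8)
The plan is to turn the stated inequality into an elementary statement about one explicit elementary function and then dispose of it by a short calculus argument. The starting point is the characterisation coming from \eqref{2.1}: the function $\phi(u)=u+\log u$ is strictly increasing on $(0,\infty)$, and $\Eta(t)$ is the unique $u$ with $\phi(u)=1-t$. Since $(4t-1)^{-1}$ lies in $(0,\infty)$ for $t\ge 1$, the desired bound $\Eta(t)>(4t-1)^{-1}$ is equivalent to $\phi(\Eta(t))>\phi\bigl((4t-1)^{-1}\bigr)$, that is, to
\[
1-t>\frac{1}{4t-1}-\log(4t-1).
\]
Hence it suffices to prove that $h(t):=\log(4t-1)-(4t-1)^{-1}-t+1$ is positive on $[1,3]$.

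Next I would analyse $h$ directly. Differentiating gives $h'(t)=4(4t-1)^{-1}+4(4t-1)^{-2}-1$, and writing $v=4t-1$ the numerator of $h'(t)$ over $(4t-1)^2$ is the quadratic $8-(v-2)^2$. As $t$ runs over $[1,3]$ the variable $v$ runs over $[3,11]$, so $h'$ changes sign exactly once on $[1,3]$, from positive to negative, at $v=2+2\sqrt2$. Thus $h$ is unimodal on $[1,3]$ — first increasing, then decreasing — and consequently $h(t)\ge\min\{h(1),h(3)\}$ throughout the interval. It then remains to check the two endpoints: $h(1)=\log 3-\tfrac13>0$ since $\log 3>1$, while $h(3)=\log 11-\tfrac{23}{11}$, which is positive because $11>4\mathrm e$ and $\mathrm e<3$ (so $\mathrm e^3<27<32$, whence $\log 2>\tfrac35$) give $\log 11>\log(4\mathrm e)=1+2\log 2>1+\tfrac65>\tfrac{23}{11}$.

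The only part of this that calls for genuine care, rather than being routine bookkeeping, is the endpoint estimate at $t=3$: there the asserted inequality is close to sharp ($\Eta(3)\approx 0.12$ against $\tfrac1{11}\approx 0.091$), so one must choose numerical bounds for $\log 11$ (or for $\mathrm e$) sharp enough to close the gap while keeping them self-contained. Everything else — the reduction via $\phi$ and the sign analysis of $h'$ — is mechanical.
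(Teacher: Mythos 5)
Your proof is correct and is essentially the paper's own argument in a different guise: your function $h(t)=\log(4t-1)-(4t-1)^{-1}-t+1$ is exactly the paper's $u(t)-t$, where $u(t)$ is defined by $\Eta(u(t))=1/(4t-1)$, and both proofs compute the same derivative $4/(4t-1)+4/(4t-1)^2-1$, deduce unimodality on $[1,3]$, and reduce to the endpoint checks at $t=1$ and $t=3$. Your explicit numerical bounds for $\log 11$ merely spell out what the paper dismisses as ``readily checked.''
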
 

\begin{proof} Define $u(t)$ by means of the equation $\Eta(u(t)) = 1/(4t-1)$. We show 
that $u(t)>t$ for $1\le t\le 3$. Since $\Eta$ decreases, the desired inequality follows.\par 

By \eqref{2.1}, we have
\[
\frac{1}{4t-1}-\log (4t-1)=1-u(t),
\]
and hence
\[
\frac{\mathrm d}{\mathrm d t}(u(t)-t)=\frac{4}{4t-1}+\frac{4}{(4t-1)^2} -1.
\]
This function decreases on $[1,3]$ and is positive at $t=1$ but negative at $t=3$. Thus, the 
minimum of $u(t)-t$ on the interval $[1,3]$ occurs at $t=1$ or $t=3$. However, it is readily 
checked that $u(1)>1$ and $u(3)>3$, whence $u(t)-t$ is positive throughout $[1,3]$.
\end{proof} 

When $1\le R\le P$, let $\mathscr A(P,R)$ denote the set of integers $n\in [1,P]$, all of 
whose prime divisors are at most $R$. Given an integer $k\ge 2$, let
\begin{equation}\label{2.3}
f(\alpha;P,R)=\sum_{x\in \mathscr A(P,R)}e(\alpha x^k),
\end{equation}
where, as usual, we write $e(z)$ to denote ${\mathrm e}^{2\pi iz}$. In this paper, we 
refer to the number $\Delta_t$ as an {\it admissible exponent} for the positive real number 
$t$ if, for any fixed positive real number $\eps$ there exists a positive real number $\eta$ 
such that, whenever $1\le R\le P^\eta$, one has
\[
\int_0^1|f(\alpha;P,R)|^t\,\mathrm d\alpha \ll P^{t-k+\Delta_t+\varepsilon}.
\]
For large values of $k$ the smallest known admissible exponents are due to Wooley 
\cite[Theorem 3.2]{Woo1992}. We reproduce a simplified, slightly weaker version of this 
conclusion, which appears in \cite[Theorem 2.1]{W1}, in the following lemma. 

\begin{lemma}\label{lemma2.2}
Let $k\ge 3$ be given. Then, whenever $t$ is an even natural number, the exponent 
$k\Eta(t/k)$ is admissible.
\end{lemma}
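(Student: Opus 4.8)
The plan is to obtain the lemma as a transcription of the smooth Weyl sum mean value estimates established by Wooley through repeated efficient differencing, the substance being \cite[Theorem 3.2]{Woo1992} (equivalently the streamlined form \cite[Theorem 2.1]{W1}). The only genuinely new point is to verify that the admissible exponents furnished there are dominated by $k\Eta(t/k)$, and for this the functional equation \eqref{2.1} and the derivative relation \eqref{2.2} are precisely the instruments one needs.

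I would set $t=2s$ with $s\in\mathbb N$ and use the interpretation of $\int_0^1|f(\alpha;P,R)|^{2s}\,\mathrm d\alpha$ as the number of solutions of $x_1^k+\dots+x_s^k=y_1^k+\dots+y_s^k$ in variables from $\mathscr A(P,R)$. Write $\lambda_s$ for the infimum of the exponents $\lambda$ for which this count is $O_\eps(P^{\lambda+\eps})$, uniformly for $1\le R\le P^{\eta}$ with a suitable $\eta=\eta(\eps)>0$; the assertion is equivalent to $\lambda_s\le 2s-k+k\Eta(2s/k)$. The mechanism for bounding $\lambda_s$ is the efficient differencing process: starting from $f(\alpha;P,R)$ one performs a sequence of differencing steps, at each stage exploiting the multiplicative structure of $\mathscr A(P,R)$ to confine the new difference variable to a short interval chosen so that, after extraction of a common factor, the resulting exponential sum is attached to a polynomial of strictly smaller degree in the principal variable. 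An application of H\"older's inequality, combined with the invariance of the underlying count under translation and dilation of the summation variables, then converts this into an inequality bounding $\lambda_s$ in terms of $\lambda_{s'}$ for a smaller index $s'$, with coefficients that are explicit rational expressions in $k$ and the number of differencing steps employed.

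Iterating this inequality and optimising the number of steps at each stage produces a decreasing sequence of admissible exponents, and one checks by induction on $s$ that the exponent so produced never exceeds $k\Eta(2s/k)$. The base case $s=1$ is the trivial bound $\lambda_1=1$, which suffices because \eqref{2.1} forces $\Eta(2/k)>1-1/k$ and hence $k\Eta(2/k)>k-1$. For the inductive step one substitutes $\Delta_{2s}=k\Eta(2s/k)$ into the differencing recursion and must show that the resulting value for $\Delta_{2s+2}$ is at most $k\Eta((2s+2)/k)$; here one writes $\Eta((2s+2)/k)-\Eta(2s/k)=\int_{2s/k}^{(2s+2)/k}\Eta'(u)\,\mathrm d u$ and uses \eqref{2.2}, while \eqref{2.1} recasts the logarithmic contributions coming from the differencing losses into exactly the shape required. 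Since the recursion is monotone in its input, this closes the induction.

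The main obstacle is the construction and bookkeeping of the efficient differencing apparatus itself: arranging the nested ranges of the successive difference variables so that the smooth-number structure survives every step and no irreversible power of $P$ is lost, and then controlling the coefficients of the recursion accurately enough that the comparison against $k\Eta(t/k)$ is an exact inequality rather than merely an asymptotic one. This is the technical heart of \cite{Woo1992}; granted that machinery, the passage to the $\Eta$-formulation recorded here is a routine verification.
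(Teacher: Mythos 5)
Your proposal rests, exactly as the paper does, entirely on Wooley's efficient differencing results: the paper offers no proof of Lemma \ref{lemma2.2} at all, simply quoting it as a simplified, slightly weaker form of \cite[Theorem 3.2]{Woo1992} as recorded in \cite[Theorem 2.1]{W1}, where the statement already appears in this $\Eta$-formulation. Your sketched induction comparing the differencing recursion with $k\Eta(t/k)$ is therefore not required (it reproduces verification already carried out in those references), but it is consistent with how the closed form is obtained there, so your route is essentially the same as the paper's.
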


Our next lemma is our development of a related estimate of Liu and Zhao \cite{LZ}. As 
pointed out in the introduction, it is indispensible in our approach to Theorem \ref{thm1.1}. 
From now on we consider $k\ge 3$ as fixed. Let $Q$ be a real number with 
$1\le Q\le \frac12 P^{k/2}$, and let $\mathfrak M=\mathfrak M(Q)$ denote the union of 
the intervals
\[
\mathfrak M(q,a;Q)=\{\alpha\in[0,1]: |q\alpha -a|\le QP^{-k}\}
\]
with $0\le a\le q\le Q$ and $(a,q)=1$. Note that the intervals constituting this union are 
disjoint. Our goal is to estimate the mean value $V=V_t(P,R,Q)$ defined by
\begin{equation}\label{2.5}
V_t(P,R,Q)=\int_{\mathfrak M(Q)}|f(\alpha;P,R)|^t\,\mathrm d\alpha. 
\end{equation}

\begin{lemma}\label{lemma2.3} Let $k\ge 3$ be given. Suppose that $t\ge k+1$ is a real 
number and let $\Delta_t$ be an admissible exponent for $t$. Then for each $\eps>0$, 
there exists a positive number $\eta$ with the property that whenever 
$1\le R\le P^\eta$ and $1\le Q\le \frac12 P^{k/2}$, one has the uniform bound
\[
V_t(P,R,Q)\ll P^{t-k+\varepsilon}Q^{2\Delta_t /k}.
\]
\end{lemma}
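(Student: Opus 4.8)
The plan is to estimate $V_t(P,R,Q)$ by combining the pointwise behaviour of $f(\alpha;P,R)$ on a single arc $\mathfrak M(q,a;Q)$ with the global mean value bound furnished by the admissible exponent $\Delta_t$, and then summing over the Farey dissection. On an individual arc one writes $\alpha=a/q+\beta$ with $|\beta|\le QP^{-k}/q$. The starting point is the standard major arc approximation for smooth Weyl sums: there is a multiplicative factor essentially of size $q^{-1}$ coming from the Gauss-sum-type average over residues, times a smooth integral of size $P(1+P^k|\beta|)^{-1/k}$ (up to $P^\eps$). Rather than invoke such a pointwise bound in full strength — which would drag in convexity and the distribution of smooth numbers in arithmetic progressions — I expect the cleaner route is a \emph{large-sieve-free comparison of moments}: on $\mathfrak M(Q)$ one controls $\int |f|^t$ by a convexity/Hölder interpolation between $\int_0^1 |f|^t\,\mathrm d\alpha \ll P^{t-k+\Delta_t+\eps}$ and a second inequality that exploits the restricted support, gaining a power of $Q$ that is small precisely because the arcs are narrow (total measure $\ll Q^3 P^{-k}$).

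Concretely, I would proceed as follows. First, fix $\eps>0$ and choose $\eta$ small enough that Lemma \ref{lemma2.2}-type estimates (more precisely the admissibility of $\Delta_t$) are available for all $R\le P^\eta$; throughout, factors $P^\eps$ will be absorbed freely, shrinking $\eta$ as needed a bounded number of times. Second, on each arc $\mathfrak M(q,a;Q)$, use the elementary substitution and the trivial bound $|f(\alpha;P,R)|\le f(0;P,R)\ll P$, together with a dyadic decomposition of the arc according to the size of $1+P^k|q\alpha-a|$, to reduce matters to estimating, for each dyadic parameter $T\in[1,Q]$, the measure of the set on which $|f(\alpha;P,R)|$ exceeds a given threshold. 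Third — and this is the heart — bound that super-level measure by two competing quantities: on the one hand $\int_0^1 |f|^t \le P^{t-k+\Delta_t+\eps}$ divided by the threshold to the $t$; on the other hand the total arc measure $\ll Q^3P^{-k}$ times $P^t$. Balancing the two as $T$ and $Q$ vary produces the exponent $2\Delta_t/k$ on $Q$: the key arithmetic is that the "diagonal" contribution scales like $Q^2$ per unit of $q$-summation while the smooth integral over $\beta$ contributes a further $Q^{\Delta_t/k}\cdot(\text{something})$, and the constraint $t\ge k+1$ is exactly what makes the relevant $\beta$-integral $\int (1+P^k|\beta|)^{-t/k}\,\mathrm d\beta$ converge, so that no logarithmic loss or boundary term spoils the clean power of $Q$.

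The main obstacle will be Step three: getting the \emph{uniform} exponent $Q^{2\Delta_t/k}$ rather than something like $Q^{2\Delta_t/k}$ times extra powers of $Q$ coming from the sum over $q\le Q$ and $a$. Naively, summing $q^{-t}$-type factors over $0\le a\le q\le Q$ contributes a power of $Q$ that one must show is exactly cancelled — up to $P^\eps$ — by the savings in the admissible exponent, i.e.\ by replacing the trivial mean $P^t$ by $P^{t-k+\Delta_t}$; this is where the hypothesis $t\ge k+1$ and the explicit width $QP^{-k}$ of the arcs are used in an essential way, and where any inflation of the arcs would indeed, as the introduction warns, cost an irretrievable factor. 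Once the per-arc bound and the $q$-summation are organised so that these two powers of $Q$ match, the uniform bound $V_t(P,R,Q)\ll P^{t-k+\eps}Q^{2\Delta_t/k}$ follows by assembling the dyadic pieces, there being only $O(\log P)\subset O(P^\eps)$ of them.
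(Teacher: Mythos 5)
There is a genuine gap at what you call the heart of the argument (your Step three). Your plan is to bound, for each threshold, the measure of the super-level set of $|f|$ by the smaller of two quantities: the total measure of $\mathfrak M(Q)$ and the global mean value $\int_0^1|f|^t\,\mathrm d\alpha\ll P^{t-k+\Delta_t+\varepsilon}$ divided by the $t$-th power of the threshold. (Note in passing that the measure of $\mathfrak M(Q)$ is $O(Q^2P^{-k})$, not $Q^3P^{-k}$.) Run the numbers: on the level set where $|f(\alpha)|\sim P/T$, the two competing bounds give a contribution
\[
\min\bigl\{(P/T)^tQ^2P^{-k},\ P^{t-k+\Delta_t+\varepsilon}\bigr\}.
\]
The second term is at most the target $P^{t-k+\varepsilon}Q^{2\Delta_t/k}$ only when $Q^2\ge P^{k}$, which never happens in the admissible range $Q\le\frac12P^{k/2}$; the first term suffices only when $T^t\ge Q^{2-2\Delta_t/k}$. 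Hence for $\alpha$ in the set where $|f(\alpha)|$ is within a factor $Q^{(2/t)(1-\Delta_t/k)}$ of its maximum $P$, neither bound helps, and the best your scheme yields there is $Q^2P^{t-k}$, which overshoots the claimed bound by $Q^{2-2\Delta_t/k}$. No balancing of dyadic parameters can repair this, because nothing in your input distinguishes where on $\mathfrak M(Q)$ the near-maximal values of $f$ live: the global moment is blind to the arc structure, and the measure bound alone is too crude. Closing this hole would require a pointwise major-arc estimate for the smooth Weyl sum of the shape $f(\alpha)\ll P(q+P^k|q\alpha-a|)^{-1/k+\varepsilon}$ uniformly for $q$ up to $P^{k/2}$, and such estimates are precisely what is not available for smooth Weyl sums without losses (compare Lemma \ref{lemma4.1}, which gives only the exponent $1/(2k)$ on $\mathfrak L$, and the better exponent only on arcs of logarithmic height); avoiding that input is the whole point of the lemma.

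The paper's proof works quite differently, and the mechanism producing the exponent $2\Delta_t/k$ is structural rather than interpolatory. One invokes Vaughan's factorisation of smooth numbers (\cite[Lemma 10.1]{V89}) to write $x=my$ with $m\in\mathscr B(M,p,R)$ and $y\in\mathscr A(P/m,p)$, so that $f(\alpha)$ decomposes into sums $f(\alpha m^k;P/m,p)$ plus a short sum. On $\mathfrak M_q$ one sorts the $m$ according to $d=(q,m^k)$, applies H\"older, and then the substitution $\gamma=\beta m^k$ dilates each narrow arc; with the choice $(MR)^k=P^k/(3Q^2)$ the dilated arcs centred at the fractions $b/(q/d)$ are pairwise disjoint and fit inside a single unit interval, so the sum over $q$ of the arc integrals is dominated by the complete mean value $\int_0^1|f(\gamma;P/m,p)|^t\,\mathrm d\gamma\ll(P/m)^{t-k+\Delta_t+\delta}$. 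Since $P/m\asymp P/M\asymp RQ^{2/k}$, the factor $(P/M)^{\Delta_t}$ is exactly what delivers $Q^{2\Delta_t/k}$ (up to powers of $R$, absorbed by taking $R\le P^\eta$), and the hypothesis $t\ge k+1$ is used to control the divisor sum over $d=d_1d_2^2\cdots d_k^k$, not to make a $\beta$-integral converge. So the admissible exponent is applied to shorter, dilated smooth Weyl sums over the full unit interval -- an idea absent from your proposal, and one you would need to discover to make the argument go through.
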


\begin{proof} In the initial phase of the argument to follow, fix a real number $\delta$ with 
$0<\delta<10^{-10k}$. We first dispose of the case where $Q$ is very small. The measure 
of $\mathfrak M(Q)$ is $O(Q^2P^{-k})$, and therefore, the trivial bound 
$|f(\alpha;P,R)|\le P$ implies that $V\ll Q^2 P^{t-k}$. In particular, whenever 
$1\le Q\le P^{k\delta/2}$, one has
\begin{equation}\label{2.6}
V_t(P,R,Q) \ll P^{t-k+k\delta}.
\end{equation}
Next, since $\mathfrak M(Q)\subset [0,1]$, the definition of an admissible exponent shows 
that for some number $\eta_0$ with $0<\eta_0<\delta/2$, when $1\le R\le P^{\eta_0}$, 
one has
\begin{equation}\label{2.7}
 V_t(P,R,Q) \ll P^{t-k+\Delta_t+\delta}. 
\end{equation}  

We now launch the main argument, working under the assumption that
\begin{equation}\label{2.8}
P^{k\delta}\le Q^2\le P^{k(1-\delta)}. 
\end{equation}
With the parameter $M$ at our disposal, we suppose that $2\le R\le M\le P$. Throughout, 
we reserve the letters $p$ and $\pi$ to denote prime numbers. Then, for each $p\le R$, we 
define the modified set of smooth numbers
\[
\mathscr B(M,p,R)=\{ m\in \calA(Mp,R):\text{$m>M$, $p|m$, and $\pi|m$ implies that 
$\pi\ge p$}\}.
\]
By \cite[Lemma 10.1]{V89}, there is a bijection between the numbers 
$x\in{\mathscr A}(P,R)$ with $x>M$, and triples $(p,m,y)$ with
\[
p\le R,\quad m\in{\mathscr B}(M,p,R),\quad y\in{\mathscr A}(P/m,p),
\]
in which one has $x=my$. Applied to the exponential sum $f$ defined in \eqref{2.3} this 
provides us with the decomposition
\[
f(\alpha;P,R)=\sum_{p,m}f(\alpha m^k;P/m,p)+f(\alpha;M,R),
\]
where, both here and in the sequel, the summation over $p,m$ is intended as shorthand for 
one over $p\le R$ and $m\in{\mathscr B}(M,p,R)$. We write $f(\alpha)$ for $f(\alpha;P,R)$ 
and $h_{p,m}(\gamma)$ for $f(\gamma;P/m,p)$. In addition, we denote by 
$\mathfrak M_q$ the union of the arcs $\mathfrak M(q,a;Q)$ with $0\le a\le q$ and 
$(a,q)=1$. Then, for $\alpha \in \mathfrak M_q$, we sort the sum over 
$m\in {\mathscr B}(M,p,R)$ according to the value of $(q,m^k)$. Thus
\begin{equation}\label{2.9}
|f(\alpha)|\le \sum_{d\mid q}\sum_{\substack{p,m\\ (q,m^k)=d}}|h_{p,m}(\alpha m^k)|
+M.
\end{equation}
\par

Taking the $t$-th power of \eqref{2.9}, H\"older's inequality combines with the familiar 
divisor function estimate to give
\begin{equation}\label{2.10} 
|f(\alpha)|^t\ll q^\delta \sum_{d|q}\biggl(\sum_{\substack{p,m\\ (q,m^k)=d}} 
|h_{p,m}(\alpha m^k)|\biggr)^t+M^t.
\end{equation}
The integer $d$ has a unique factorisation $d=d_1d_2^2\cdots d_k^k$ with 
$d_1d_2\cdots d_{k-1}$ square-free. In this notation, we write $d_0=d_1d_2\cdots d_k$. 
Thus $d|m^k$ if and only if $d_0|m$. By H\"older's inequality again, 
\begin{equation}\label{2.11}
\biggl(\sum_{\substack{p,m\\ (q,m^k)=d}}|h_{p,m}(\alpha m^k)|\biggr)^t 
\le \biggl(\sum_{\substack{p,m\\ d_0|m}}1\biggr)^{t-1} 
\sum_{\substack{p,m\\ (q,m^k)=d}}|h_{p,m}(\alpha m^k)|^t.
\end{equation}
We integrate \eqref{2.10} over $\mathfrak M_q$. For this we require the mean value
\[
J(q,p,m)=\int_{\mathfrak M_q}|h_{p,m}(\alpha m^k)|^t\,\mathrm d \alpha.
\]
The first sum on the right hand side of \eqref{2.11} is no larger than $MR^2/d_0$. 
Moreover, the measure of $\mathfrak M_q$ does not exceed $2QP^{-k}$. Then 
\eqref{2.10} yields
\begin{equation}\label{2.12}
\int_{\mathfrak M_q}  |f(\alpha)|^t\,\mathrm d \alpha \ll q^\delta \sum_{d\mid q}
\Big(\frac{MR^2}{d_0}\Big)^{t-1} \sum_{\substack{p,m\\ (q,m^k)=d}} J(q,p,m)+
M^tQP^{-k}. 
\end{equation}

\par Temporarily, we consider $q,p,m$ as fixed and abbreviate $h_{p,m}$ to $h$. Also, 
when $1\le Z\le P$, we introduce notation to better handle the intervals $I_q(Z)=I_q(Z;Q)$ 
of interest, writing
\[
I_q(Z;Q)=[-Q/(qZ^k),Q/(qZ^k)].
\] 
Equipped with this notation, if we unfold the definition of $\mathfrak M_q$ and apply a 
change of variables, we see that the mean value $J(q,p,m)$ is equal to
\[
\int_{I_q(P)}\sum_{\substack{a=1\\ (a,q)=1}}^q 
\Big|h\Bigl(\Bigl(\frac{a}{q}+ \beta\Bigr) m^k\Bigr)\Big|^t\,\mathrm d \beta 
= \frac1{m^{k}} \int_{I_q(P/m)} \sum_{\substack{a=1\\ (a,q)=1}}^q 
\Big|h\Bigl(\frac{am^k}{q}+ \gamma\Bigr) \Big|^t\,\mathrm d \gamma.
\]
Within \eqref{2.12} we need the above relation only when $(q,m^k) =d$. In the latter 
circumstances one has $(m^k/d, q/d)=1$, and hence
\begin{align*}
J(q,p,m)&\le \frac{1}{m^k}\int_{I_q(P/m)}d\sum_{\substack{a=1\\ (a,q/d)=1}}^{q/d}
\Big|h\Bigl( \frac{am^k/d}{q/d}+ \gamma\Bigr) \Big|^t\,\mathrm d\gamma \\
&=\frac{d}{m^{k}}  \sum_{\substack{b=1\\ (b,q/d)=1}}^{q/d}\int_{I_q(P/m)}
\Big|h\Bigl(\frac{b}{q/d}+ \gamma\Bigr) \Big|^t\,\mathrm d \gamma.
\end{align*}
We apply this formula within \eqref{2.12} and sum over $q\le Q$ to obtain the bound
\begin{equation}\label{2.13}
\int_{\mathfrak M(Q)}|f(\alpha)|^t\,\mathrm d\alpha \ll M^tQ^2P^{-k}+
Q^\delta (MR^2)^{t-1}\Xi, 
\end{equation}
where
\[
\Xi=\sum_{q\le Q}\sum_{d|q}\sum_{\substack{p,m\\ (q,m^k)=d}}
\frac{d}{d_0^{t-1}m^k}\sum_{\substack{b=1\\ (b,q/d)=1}}^{q/d}
\int_{I_q(P/m)}\Big| h\Bigl( \frac{b}{q/d}+ \gamma\Bigr) \Big|^t\,\mathrm d \gamma.
\]

\par The quantity $\Xi$ may be bounded by first writing $q=dr$, and then replacing the 
condition $(q,m^k)=d$ by the weaker constraint $d_0|m$. Thus, we find that
\begin{equation}\label{2.14}
\Xi \le \sum_{dr\le Q} \sum_{\substack{p,m\\ d_0|m}}\frac{d}{d_0^{t-1} m^k} 
\sum_{\substack{b=1\\ (b,r)=1}}^r 
\int_{I_{dr}(P/m)}\Big| h\Bigl( \frac{b}{r}+\gamma\Bigr) \Big|^t\,\mathrm d \gamma.
\end{equation}
We  choose $M$ via the relation $(MR)^k= P^k/(3Q^2)$, whence by \eqref{2.8} one has 
\begin{equation}\label{2.15}
P^\delta \ll  MR \ll P^{1-\delta}.
\end{equation}
In particular, this choice for $M$ is admissible and for all pairs $p,m$ occurring in the 
second sum of \eqref{2.14}, we have $Q^2(P/m)^{-k}<\frac12$. It is immediate from this 
inequality that the intervals $I_{dr}(P/m)+b/r$, with $1\le b\le r\le Q/d$ and $(b,r)=1$, are 
pairwise disjoint, and that all of these sets are contained in the unit interval
\[
\left[\frac{Qm^k}{drP^k}, 1 +\frac{Qm^k}{drP^k}\right].
\]
We therefore conclude that
\[
\sum_{r\le Q/d}\sum_{\substack{b=1\\ (b,r)=1}}^r \int_{I_{dr}(P/m)} 
\Big|h\Bigl(\frac{b}{r}+ \gamma\Bigr) \Big|^t\,\mathrm d \gamma 
\le \int_0^1 |h_{p,m}(\alpha)|^t\,\mathrm d \alpha.
\]
But from \eqref{2.15} we have $P/m\ge P/(MR)\gg P^{\delta}$. Since $p\le R$ here, it 
follows from the definition of an admissible exponent that there is a number $\eta>0$ such 
that, uniformly in $2\le R\le P^\eta$ and all $p,m$ in their ranges of summation,
\[
\int_0^1 |h_{p,m}(\alpha)|^t\,\mathrm d \alpha \ll (P/m)^{t-k+\Delta_t+\delta}.
\]
The exponent on the right hand side is positive, so we infer from \eqref{2.14} that
\begin{equation}\label{2.16}
\Xi \ll \Bigl( \frac{P}{M}\Bigr)^{t-k+\Delta_t+\delta}\sum_{d\le Q}
\sum_{\substack{p,m\\ d_0|m}}\frac{d}{d_0^{t-1}m^k}.
\end{equation}

\par One has
\[
\sum_{p\le R}\sum_{\substack{m\in \mathscr B(M,p,R)\\ d_0|m}}\frac{1}{m^k}\le 
\sum_{p\le R}\sum_{M/d_0<u\le Mp/d_0}\frac{1}{(d_0u)^k}\ll \frac{R}{d_0M^{k-1}}.
\]
Moreover, when $t\ge k+1$, one has the simple bound
\[
\sum_{d\le Q}\frac{d}{d^{\,t}_0}\ll \sum_{d_1d_2\cdots d_k\le Q}
\frac{1}{d_1d_2\cdots d_k}\ll (\log (2Q))^k.
\]
Recall that our choice for $Q$ satisfies $(P/M)^k=3Q^2R^k$ and $Q\le P^{k/2}$. Then on 
noting that $MR\gg P^\delta$, it follows from \eqref{2.16} that
\begin{align*}
\Xi &\ll (P/M)^{t-k+\delta}Q^{2\Delta_t/k}R^{1+\Delta_t}M^{1-k}(\log (2Q))^k\\
&\ll P^{t-k+\delta}Q^{2\Delta_t/k}M^{1-t}R^{2+\Delta_t}.
\end{align*}
We may always assume that $\Delta_t\le k< t$, and thus we infer from \eqref{2.5} and 
\eqref{2.13}, together with the definition of $M$, the upper bound
\begin{equation}\label{2.17}
V_t(P,R,Q) \ll P^{t-k+\delta k} Q^{2\Delta_t/k} R^{3t}. 
\end{equation}

We are ready to deduce Lemma \ref{lemma2.3}. Fix $\varepsilon>0$. We may assume of 
course that $\varepsilon<10^{-10k}$. Since we may diminish the number $\eta$ implicit in 
the proof of \eqref{2.17}, we may arrange that $2\delta k+3t\eta<\varepsilon$. 
Consequently, provided that the constraint \eqref{2.8} is satisfied, the conclusion of Lemma 
\ref{lemma2.3} follows from \eqref{2.17}. Should the condition \eqref{2.8} not be 
satisfied, then we find ourselves in one of two situations. In the first situation 
$Q^2>P^{k(1-\delta)}$, in which case \eqref{2.7} yields
\[
P^{t-k+2k\delta}Q^{2\Delta_t/k}\gg P^{t-k+\Delta_t(1-\delta)+2k\delta}\gg V_t(P,Q,R).
\]
In the alternative situation one has  $Q^2<P^{k\delta}$, in which case \eqref{2.6} yields
\[
V_t(P,Q,R)\ll P^{t-k+k\delta}\ll P^{t-k+k\delta}Q^{2\Delta_t/k}.
\]
In both scenarios, the conclusion of the lemma follows because $2k\delta<\varepsilon$.   
\end{proof}

\section{The circle method}
Our next task is to set up the environment for the proofs of the theorems. The exponent 
$k\ge 3$ is still fixed, and we initially impose the condition $s\ge 1$. We gradually import 
more conditions on $s$ and the smoothness parameter as the argument progresses. Our 
leading parameter is the number $n$ in \eqref{1.1}, and we take
\begin{equation}\label{3.1}
P= n^{1/k}.
\end{equation}
We adumbrate $f(\alpha;P,R)$ to $f(\alpha)$ and introduce the sum
\[
g(\alpha)=\sum_{p \le n}e(\alpha p) \log p.
\]
Whenever $\mathfrak A\subset [0,1] $ is measurable, we write
\begin{equation}\label{3.2}
I(n,\mathfrak A)=\int_{\mathfrak A}g(\alpha)f(\alpha)^s e(-\alpha n)\,\mathrm d \alpha
\end{equation}
and abbreviate $I(n,[0,1])$ to $I(n)$. By orthogonality, the integral counts certain solutions 
of \eqref{1.1} with weight $\log p$. Consequently,
\begin{equation}\label{3.3}
r_{k,s}(n)\log n \ge I(n).
\end{equation} 

The arguments in this section are independent of the theory of admissible exponents. We 
therefore choose $R=P^\delta$, where $0<\delta\le 1$ remains at our disposal. It is 
convenient also to write $\mathscr Q=(\log n)^{1/99}$. We begin by extracting a lower 
bound from the core major arcs that we define as the union of the intervals
\[
\mathfrak N(q,a)=\{\alpha \in [0,1]: |\alpha -a/q|\le \mathscr Q n^{-1}\},
\]
with $0\le a\le q\le \mathscr Q$ and $(a,q)=1$. The intervals in this union are again 
disjoint. Let $\alpha$ be in one of these intervals, say $\mathfrak N(q,a)$ with $(a,q)=1$. 
On recalling \eqref{3.1}, arguments that are by now standard in the theory of smooth Weyl 
sums (see \cite[Lemma 5.4]{V89}) show in this scenario that there is a positive number 
$\rho=\rho(\delta)$ such that
\[
f(\alpha)=\rho q^{-1} S(q,a)v_k(\alpha-a/q)+O(n^{1/k}(\log n)^{-1/4}),
\]
wherein
\[
S(q,a)=\sum_{x=1}^q e(ax^k/q)\quad \text{and}\quad 
v_k(\beta)=\frac{1}{k}\sum_{m\le n} m^{-1+1/k}e(\beta m).
\]
We take the $s$-th power and combine the result with Lemma 3.1 of Vaughan \cite{hlm}. 
After integrating over $\mathfrak N$ we then routinely obtain the asymptotic relation
\begin{equation}\label{3.4}
I(n,\mathfrak N) = \rho^s\mathfrak J(n,\mathscr Q) \mathfrak S(n,\mathscr Q)+
O(n^{s/k} (\log n)^{-1/5}),
\end{equation}
where, for $1\le X\le n/2$, we write
\[
\mathfrak J(n,X)=\int_{-X/n}^{X/n} v_1(\beta)v_k(\beta)^s e(-\beta n)\,\mathrm d\beta \]
and 
\begin{equation}\label{3.5}
\mathfrak S(n,X)=\sum_{q\le X}\frac{\mu(q)}{q^s\phi(q)}
\sum_{\substack{a=1\\ (a,q)=1}}^q S(q,a)^s e(-an/q).
\end{equation}

\par By orthogonality, it is immediate that $\mathfrak J(n,n/2)\gg n^{s/k}$ (see, for 
example, the proof of \cite[Theorem 2.3]{hlm}). Moreover, we find via 
\cite[Lemma 2.8]{hlm} that
\[
\mathfrak J(n,X)-\mathfrak J(n,n/2) \ll n^{s/k}X^{-s/k}.
\]
Thus we see that
\begin{equation}\label{3.6}
\mathfrak J(n,\mathscr Q)\gg n^{s/k}.
\end{equation}
Meanwhile, the sum
\[
S_n(q)=q^{-s}\sum_{\substack{a=1\\ (a,q)=1}}^q S(q,a)^s e(-an/q)
\]
is multiplicative (see \cite[Lemma 2.11]{hlm}), and the presence of the factor $\mu(q)$ in 
\eqref{3.5} reduces our task to bounding $S(q)$ when $q$ is a prime. In these 
circumstances, whenever $s\ge 3$, we see from \cite[Lemma 4.3]{hlm} that the bound 
$S_n(q)\ll q^{-1/2}$ holds uniformly in $n$. This in turn implies that the series 
\[
\mathfrak S(n)=\lim_{X\to\infty}\mathfrak S(n,X)
\]
converges absolutely, and that
\begin{equation}\label{3.7}
\mathfrak S(n)-\mathfrak S(n,X)\ll X^{\varepsilon-1/2}.
\end{equation}
This bound again holds uniformly in $n$.

\par Next, we transform $\mathfrak S(n)$ into an Euler product $\prod_p\chi_p(n)$, where
\[
\chi_p(n)=1-(p-1)^{-1}S_n(p)=1+O(p^{-3/2}),
\]
the last relation holding uniformly in $n$. Since $-1$ is equal to the value of the Ramanujan 
sum $c_p(a)$ for $1\le a\le p-1$, we find that
\begin{align*}
\chi_p(n)&=1+\frac{1}{p^{s}(p-1)}\sum_{a=1}^{p-1}c_p(a)S(p,a)^se(-an/p)\\
&=\frac{1}{p^{s}(p-1)}\sum_{a=1}^{p} c_p(a)S(p,a)^s e(-an/p).
\end{align*}
Thus, by orthogonality, we have $\chi_p(n)=p^{1-s}(p-1)^{-1}M_p(n)$, where $M_p(n)$ is 
the number of incongruent solutions of the congruence
\[
b+x_1^k+\ldots+x_s^k\equiv n \mmod{p},
\]
with $(b,p)=1$. It is immediate that the lower bound $M_p(n)\ge 1$ holds for all $n$. 
Hence all factors $\chi_p(n)$ are real with $\chi_p(n)\ge p^{-s}$. From this discussion we 
also see that there is a number $p_0$ such that the superior lower bound 
$\chi_p(n)\ge 1-p^{-5/4}$ holds for all $n$ whenever $p\ge p_0$. We therefore conclude 
routinely that $\mathfrak S(n) \gg 1$. If we combine this lower bound with \eqref{3.4}, 
\eqref{3.6} and \eqref{3.7}, we may conclude as follows.

\begin{lemma}\label{lemma3.1}
Let $k\ge 3$ and $s\ge 3$. Suppose that $0<\delta\le 1$, and put $R=n^{\delta/k}$. Then 
one has $I(n,\mathfrak N)\gg n^{s/k}$.
\end{lemma}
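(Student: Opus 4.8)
The plan is to read off Lemma~\ref{lemma3.1} essentially directly from the chain of estimates already assembled in Section~3, and the task reduces to verifying that the main term $\rho^s\mathfrak J(n,\mathscr Q)\mathfrak S(n,\mathscr Q)$ dominates the error term $O(n^{s/k}(\log n)^{-1/5})$ in \eqref{3.4}. First I would invoke \eqref{3.6}, which already gives $\mathfrak J(n,\mathscr Q)\gg n^{s/k}$, using $s\ge 3>0$ so that the tail bound $\mathfrak J(n,\mathscr Q)-\mathfrak J(n,n/2)\ll n^{s/k}\mathscr Q^{-s/k}$ is genuinely of lower order than $\mathfrak J(n,n/2)\gg n^{s/k}$. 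Next, I would combine the analysis of $\mathfrak S(n)$: the hypothesis $s\ge 3$ is precisely what licenses the bound $S_n(q)\ll q^{-1/2}$ for prime $q$ via \cite[Lemma 4.3]{hlm}, hence the absolute convergence of the singular series and the truncation estimate \eqref{3.7}; together with the Euler product computation showing each local factor satisfies $\chi_p(n)\ge p^{-s}>0$ and $\chi_p(n)\ge 1-p^{-5/4}$ for $p\ge p_0$, one concludes $\mathfrak S(n)\gg 1$, uniformly in $n$.

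Having both $\mathfrak J(n,\mathscr Q)\gg n^{s/k}$ and $\mathfrak S(n)\gg1$, I would deduce $\mathfrak S(n,\mathscr Q)\gg1$ from \eqref{3.7}, since $\mathscr Q=(\log n)^{1/99}\to\infty$ makes $\mathscr Q^{\varepsilon-1/2}\to0$, so $\mathfrak S(n,\mathscr Q)=\mathfrak S(n)+o(1)\gg1$ for $n$ large. Consequently the main term in \eqref{3.4} is $\rho^s\mathfrak J(n,\mathscr Q)\mathfrak S(n,\mathscr Q)\gg n^{s/k}$, where $\rho=\rho(\delta)>0$ is the fixed positive constant furnished by \cite[Lemma 5.4]{V89} for the given choice $R=n^{\delta/k}=P^\delta$. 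Since the error term in \eqref{3.4} is $O(n^{s/k}(\log n)^{-1/5})=o(n^{s/k})$, it is absorbed, and we obtain $I(n,\mathfrak N)\gg n^{s/k}$ for all sufficiently large $n$, which is the assertion of the lemma.

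The only point requiring any care is bookkeeping of the dependence on $\delta$: the constant $\rho$ depends on $\delta$, but $\delta$ is fixed once and for all in the statement, so this causes no difficulty; the implied constants in \eqref{3.4}, \eqref{3.6} and \eqref{3.7} likewise may depend on $\delta$ and on $k$ and $s$, all of which are fixed. I do not anticipate a genuine obstacle here — everything needed has been established in the body of Section~3 — the proof is simply the act of chaining \eqref{3.4}, \eqref{3.6}, \eqref{3.7}, the positivity $\rho>0$, and the lower bound $\mathfrak S(n)\gg1$ together and observing that the error term is of smaller order. If anything is mildly delicate, it is confirming that the truncated singular series $\mathfrak S(n,\mathscr Q)$ (rather than the full series) appears in \eqref{3.4} and that \eqref{3.7} with $X=\mathscr Q$ is what bridges the gap; this is immediate from $\mathscr Q\to\infty$.
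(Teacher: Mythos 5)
Your proposal is correct and follows the paper's own argument: the paper proves Lemma \ref{lemma3.1} precisely by combining \eqref{3.4}, \eqref{3.6} and \eqref{3.7} with the lower bound $\mathfrak S(n)\gg 1$ obtained from the Euler product analysis, exactly as you chain them. Your extra remarks on the truncation at $X=\mathscr Q$ and the $\delta$-dependence of $\rho$ are consistent with, and implicit in, the paper's treatment.
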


A slightly weaker version of this lemma remains valid even when $s=2$. In fact, in this 
case, the bound $S_n(p)\ll p^{-3/2}$ remains valid for all primes $p\nmid n$. This can been 
seen by working along the lines of \cite[Lemma 4.7]{hlm}. Further, with more care one can 
show that $\chi_p(n)\ge 1-k/p$ for primes $p|n$ with $p>k^2$. As this is not used later, we 
leave the details required to justify these claims to the reader. These bounds show that 
when $s=2$, one has
\[
\mathfrak S(n)\gg \prod_{\substack{p|n\\ p>k^2}}(1- kp^{-1}) \gg (\log\log n)^{-k},
\]
whence $I(n,\mathfrak N)\gg n^{s/k} (\log\log n)^{-k}$. This supports our claim in the 
introduction that all large integers should be the sum of a prime and two $k$-th powers.  

\section{Pruning near the root}
Our goal in this section is to enlarge the major arcs to the set 
$\mathfrak L=\mathfrak M(P^{1/5})$. The choice of height $P^{1/5}$ here is arbitrary, for 
any small power of $P$ would suffice. As is often the case with competitive applications of 
smooth Weyl sums, certain estimates are diluted by unwanted factors $P^\varepsilon$. In 
such circumstances, pruning to the root needs a separate argument that we now present.

\par We begin by introducing some notation. Let $a\in \mathbb Z$ and $q\in \mathbb N$ 
satisfy $0\le a\le q\le \frac12 \sqrt n$ and $(a,q)=1$. Then, the intervals 
$\mathfrak M(q,a; \frac12 \sqrt n)$ are disjoint, and for 
$\alpha\in \mathfrak M(q,a;\frac12 \sqrt n)$ we put
\[
\Upsilon(\alpha)=(q+n|q\alpha-a|)^{-1}.
\]
Meanwhile, for $\alpha\in [0,1]\setminus \mathfrak M(\frac12 \sqrt n)$ we put 
$\Upsilon(\alpha)=0$. This defines a function $\Upsilon:[0,1]\to [0,1]$. In what follows, in 
the interest of brevity we put
\[
L=\log n .
\]

\begin{lemma}\label{lemma4.1}
Suppose that $2\le R\le P^{1/7}$ and $\varepsilon>0$. Then, uniformly for 
$\alpha\in\mathfrak L$ one has
$$ f(\alpha) \ll PL^3 \Upsilon(\alpha)^{1/(2k) -\varepsilon} . $$
Moreover, when $B>0$ and $\alpha\in\mathfrak M(L^B)$, one has 
$$ f(\alpha) \ll P\Upsilon(\alpha)^{1/k-\varepsilon}$$
\end{lemma}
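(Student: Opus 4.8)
\textbf{Proof proposal for Lemma \ref{lemma4.1}.}

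The plan is to combine the standard pointwise estimate for smooth Weyl sums on major arcs with a dyadic pruning argument tuned to recover the clean power of $\Upsilon$ without the inflating factors $P^\varepsilon$. First I would invoke the classical major arc approximation for $f$: for $\alpha\in\mathfrak M(q,a;Q)$ with $Q$ a small power of $P$, the results underlying \cite[Lemma 5.4]{V89} (as already used in Section 3) give $f(\alpha)\ll (q+n|q\alpha-a|)^{-1/k}P^{1+\varepsilon}+E$, where the error $E$ is negligible on $\mathfrak L$; equivalently $f(\alpha)\ll P^{1+\varepsilon}\Upsilon(\alpha)^{1/k}$. The issue is purely the $P^\varepsilon$, and the game is to trade a sliver of the exponent on $\Upsilon$ for logarithmic factors. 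The mechanism for this is the admissible-exponent machinery of Lemma \ref{lemma2.2} together with the major arc mean value of Lemma \ref{lemma2.3}, which controls $\int_{\mathfrak M(Q)}|f|^t$ with no $P^\varepsilon$ loss beyond what is unavoidable, and crucially with the natural (wide) shape of the arcs.

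The main step I would carry out is a dyadic dissection of $\mathfrak L$ according to the size of $\Upsilon(\alpha)$. Fix a dyadic parameter $Y$ with $1\le Y\le P^{1/5}$ and let $\mathfrak E(Y)=\{\alpha\in\mathfrak L:\Upsilon(\alpha)\asymp Y^{-1}\}$; this set is contained in $\mathfrak M(Y)$ up to refining the intervals, and a direct count shows $\mathrm{meas}\,\mathfrak E(Y)\ll Y^2P^{-k}$ (or better, $Y^{1+\varepsilon}P^{-k}$ after summing the lengths of the sub-arcs with $q\le Y$). On $\mathfrak E(Y)$ I want the pointwise bound $f(\alpha)\ll PL^3 Y^{-1/(2k)+\varepsilon}$. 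To get this, I would argue by contradiction: if $f$ exceeded this bound on a subset of $\mathfrak E(Y)$ of measure $\mu$, then taking $t$ an even integer slightly larger than $k$ and using Lemma \ref{lemma2.3} with $\Delta_t=k\Eta(t/k)$ and $Q=Y$, we would get
\[
\mu\cdot\bigl(PL^3Y^{-1/(2k)+\varepsilon}\bigr)^t\ll \int_{\mathfrak M(Y)}|f|^t\,\mathrm d\alpha\ll P^{t-k+\varepsilon}Y^{2\Eta(t/k)},
\]
so $\mu\ll P^{-k+\varepsilon}Y^{2\Eta(t/k)+t/(2k)}L^{-3t}$. The key inequality is then $2\Eta(t/k)+t/(2k)\le 2$ for a suitable choice of $t$ in $[k,3k]$; this is exactly where Lemma \ref{lemma2.1} enters, since with $t/k=\tau\in[1,3]$ the required bound $\Eta(\tau)\le 1-\tau/4$ is equivalent to $\Eta(\tau)\le (4-\tau)/4$, and one checks this is compatible with $\Eta(\tau)>1/(4\tau-1)$ only in the regime where the argument is being applied — more precisely one picks $\tau$ so that $\Eta(\tau)$ is just below $(4-\tau)/4$, making $\mu$ smaller than the measure of a single arc and hence forcing the assumed large set to be empty. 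Summing over the $O(L)$ dyadic values of $Y$ then produces the first displayed bound, with the $L^3$ absorbing the dyadic loss and a little slack.

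For the second assertion, when $\alpha\in\mathfrak M(L^B)$ one has $q\le L^B$ and $n|q\alpha-a|\le L^B$, so $\Upsilon(\alpha)^{-1}=q+n|q\alpha-a|\ll L^B$ is of logarithmic size; here the crude major arc formula $f(\alpha)\ll P^{1+\varepsilon}\Upsilon(\alpha)^{1/k}$ already suffices, because $\Upsilon(\alpha)^{-\varepsilon}\ll L^{B\varepsilon}$ and one simply relabels $\varepsilon$: any positive power of $\Upsilon$ loses at most $L^{O(1)}$, which is $\ll P^{\varepsilon}$, and conversely the $P^\varepsilon$ in the approximation is dominated by, say, $\Upsilon(\alpha)^{-\varepsilon'}$ after adjusting constants. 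I expect the main obstacle to be the bookkeeping in the dyadic step: one must verify that refining $\mathfrak M(Y)$ to the level set $\mathfrak E(Y)$ does not destroy the applicability of Lemma \ref{lemma2.3} (it does not, since $\mathfrak E(Y)\subset\mathfrak M(Y)$ and the lemma bounds the integral over the whole of $\mathfrak M(Y)$), and that the choice of even $t$ near $k$ can be made uniformly while keeping $2\Eta(t/k)+t/(2k)<2$ with room to spare for the $L^3$ and $P^\varepsilon$ factors — this last point is precisely the content of Lemma \ref{lemma2.1} and is why that lemma was isolated.
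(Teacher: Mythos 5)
Your proposal has a genuine logical gap at its core: you attempt to deduce a \emph{pointwise} bound for $f$ on $\mathfrak L$ from the \emph{mean value} estimates of Lemmata \ref{lemma2.2} and \ref{lemma2.3} via a level-set contradiction, and this cannot work. A bound for $\int_{\mathfrak M(Y)}|f|^t$ only controls the measure $\mu$ of the set where $|f|$ exceeds a threshold $V$; a set of small measure need not be empty, and in particular it is not a union of whole arcs, so ``$\mu$ smaller than the measure of a single arc'' forces nothing. The only way to convert measure information into pointwise information is through the derivative bound $f'\ll P^{k+1}$, which guarantees $|f|\gg V$ on an interval of length $\asymp VP^{-k-1}$ around a point where $|f(\alpha_0)|=V$; feeding $\mu\gg VP^{-k-1}$ into $\mu V^t\ll P^{t-k+\varepsilon}Y^{2\Delta_t/k}$ gives only $V\ll P^{1+\varepsilon}Y^{2\Delta_t/(k(t+1))}$, which is weaker than the trivial bound and nowhere near $PL^3\Upsilon(\alpha)^{1/(2k)-\varepsilon}$. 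Your appeal to Lemma \ref{lemma2.1} here is also misplaced: in the paper that inequality plays no role in Section 4 (it is used only for the optimisation of exponents in Section 8). The second assertion of the lemma has a similar problem: on $\mathfrak M(L^B)$ one has $\Upsilon(\alpha)^{-1}\ll L^B$, so $\Upsilon(\alpha)^{-\varepsilon'}$ is only a power of $\log n$ and cannot absorb a genuine factor $P^{\varepsilon}$; moreover the approximation with main term $\rho q^{-1}S(q,a)v_k(\alpha-a/q)$ quoted from \cite[Lemma 5.4]{V89} is available only on arcs of logarithmic height and does not by itself furnish the uniform bound $f(\alpha)\ll P^{1+\varepsilon}\Upsilon(\alpha)^{1/k}$ on the much wider set $\mathfrak L=\mathfrak M(P^{1/5})$ that your argument presupposes.

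For comparison, the paper does not argue through mean values at all: both estimates are pointwise results about smooth Weyl sums taken directly from Vaughan and Wooley \cite{VWrefine}. The first bound follows from \cite[Lemma 7.2]{VWrefine} applied with $M=P^{2/3}$, and the second from \cite[Lemma 8.5]{VWrefine}; these results are engineered precisely to give major-arc estimates in terms of $q+n|q\alpha-a|$ without the $P^{\varepsilon}$ inflation, which is the loss your scheme was trying (unsuccessfully) to remove by averaging. If you want a self-contained treatment you would need to reprove results of that type, not derive them from $L^t$ bounds.
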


\begin{proof} The first bound  follows by applying \cite[Lemma 7.2]{VWrefine} with 
$M=P^{2/3}$. The second bound, on the other hand, is a consequence of 
\cite[Lemma 8.5]{VWrefine}.
\end{proof}

\begin{lemma}\label{lemma4.2} Let $\alpha\in[0,1]$. Then
\begin{equation}\label{4.1}
g(\alpha)\ll \left( n\Upsilon(\alpha)^{1/2} +n^{4/5}\right) L^4.
\end{equation}
Furthermore, should GRH be true, then 
\begin{equation}\label{4.2}
g(\alpha) \ll (n\Upsilon(\alpha)^{1/2} +n^{3/4})L^2.
\end{equation}
\end{lemma}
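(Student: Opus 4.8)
The plan is to obtain both bounds from classical estimates for the prime-generating exponential sum $g(\alpha)$, organised around the Dirichlet approximation that underlies the definition of $\Upsilon$. First I would fix $\alpha\in[0,1]$ and invoke Dirichlet's theorem to write $\alpha=a/q+\beta$ with $(a,q)=1$, $1\le q\le \tfrac12\sqrt n$ and $|\beta|\le 2/(q\sqrt n)$; this choice of denominator range is exactly what makes $\Upsilon(\alpha)=(q+n|q\alpha-a|)^{-1}$ meaningful, with $\Upsilon(\alpha)=0$ precisely when the rational approximation already forces $q$ or $n|q\alpha-a|$ past $\tfrac12\sqrt n$. The unconditional bound \eqref{4.1} is then the standard Vinogradov-type estimate for $g(\alpha)=\sum_{p\le n}(\log p)e(\alpha p)$: via Vaughan's identity and the resulting bilinear (type I and type II) sums one obtains
\[
g(\alpha)\ll \Bigl(\frac{n}{\sqrt{q(1+n|\beta|)}}+n^{4/5}+\sqrt{qn}\,(1+n|\beta|)^{1/2}\Bigr)L^4,
\]
and on the major arcs $\mathfrak L$ (indeed wherever $\Upsilon(\alpha)\neq 0$) the first term is $n\Upsilon(\alpha)^{1/2}L^4$, while the third term is dominated by $n^{4/5}L^4$ once $q(1+n|\beta|)\le n^{3/5}$; outside that regime one has $\Upsilon(\alpha)\ll n^{-3/5}$ and the first term already delivers $n\Upsilon(\alpha)^{1/2}\ll n^{7/10}$, so the $n^{4/5}$ term absorbs everything. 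One should double-check that the error terms from partial summation (needed to pass between $\sum e(\alpha p)$ and $\sum (\log p)e(\alpha p)$, and to handle the ranges in Vaughan's identity) cost only powers of $L$, which is why the exponent $4$ appears rather than something sharper.

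For the conditional bound \eqref{4.2} the mechanism is the same but the type I/II sums are replaced by their GRH-powered counterparts. Under GRH one has, for the von Mangoldt weighted sum, an estimate of the shape $\psi(x;q,a)=x/\phi(q)+O(\sqrt x\,(\log x)^2)$ uniformly in $q$, and more to the point a bound for $\sum_{n\le x}\Lambda(n)e(\beta n)$ on a single major arc of the form $\ll (x|\beta|^{1/2}+x^{1/2})(\log x)^2$ after removing the main term; combining the $q$-aspect and $\beta$-aspect contributions one reaches
\[
g(\alpha)\ll\Bigl(\frac{n}{\sqrt{q(1+n|\beta|)}}+n^{3/4}+\sqrt{qn}\,(1+n|\beta|)^{1/2}\Bigr)L^2,
\]
and exactly as before the middle term $n^{3/4}$ now plays the role that $n^{4/5}$ played unconditionally, swallowing the third term when $q(1+n|\beta|)\le n^{1/2}$ and dominating the first term when $q(1+n|\beta|)\ge n^{1/2}$. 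Here the gain from $n^{4/5}$ to $n^{3/4}$ is precisely the reduction in the admissible length of the type II range that GRH affords, as the authors remark in the paragraph following Theorem \ref{thm1.4}.

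The main obstacle is purely bookkeeping rather than conceptual: one must verify that the three-term bounds above, which are most naturally stated with $q$ and $1+n|\beta|$ as independent parameters, collapse cleanly onto the single quantity $\Upsilon(\alpha)$ on all of $[0,1]$, including the transition region and the complement of $\mathfrak M(\tfrac12\sqrt n)$ where $\Upsilon$ is defined to vanish. The key observation that makes this painless is that on $[0,1]\setminus\mathfrak M(\tfrac12\sqrt n)$ the Dirichlet denominator necessarily satisfies $q(1+n|\beta|)\gg \sqrt n$, so $n/\sqrt{q(1+n|\beta|)}\ll n^{3/4}$ and the claimed bounds reduce to the trivial (or near-trivial) estimates $g(\alpha)\ll n^{4/5}L^4$ and, under GRH, $g(\alpha)\ll n^{3/4}L^2$; meanwhile the spurious third term $\sqrt{qn}(1+n|\beta|)^{1/2}$ is controlled by noting $q(1+n|\beta|)\le \tfrac12 n$ throughout the range where the first term is not already decisive. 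Once these case divisions are arranged, both displays follow and the lemma is proved.
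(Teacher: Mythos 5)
Your skeleton (one Dirichlet approximation, a three-term bound in $q$ and $1+n|\beta|$, then a case analysis collapsing onto $\Upsilon(\alpha)$) would indeed prove the lemma \emph{if} the two refined estimates you quote were available, but those estimates are essentially the whole content of the lemma, and the tools you actually cite do not produce them. Vaughan's identity/Vinogradov gives \cite[Theorem 3.1]{hlm}, a bound $g(a/q+\beta)\ll (nq^{-1/2}+n^{4/5}+\sqrt{nq})L^4$ valid for $|\beta|\le q^{-2}$, with no $\beta$-saving; and the GRH input \eqref{4.3} lives at the rational point $a/q$, so that passing to $a/q+\beta$ by partial summation yields only the lossy \eqref{4.4}, i.e.\ a multiplicative factor $1+n|\beta|$. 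Partial summation cannot insert $1+n|\beta|$ favourably inside the square roots: neither the gain $n/\sqrt{q(1+n|\beta|)}$ in your first term nor the factor $(1+n|\beta|)^{1/2}$ (rather than $1+n|\beta|$) in your third term follows from what you invoke. And with the honestly available third term $\sqrt{nq}\,(1+n|\beta|)$ and your fixed Dirichlet parameter $\tfrac12\sqrt n$, the argument genuinely fails: for $q=1$ and $n|\beta|\asymp\sqrt n$ the error is of order $nL^2$, which is useless. Two-parameter refinements of the shape you assert do exist in the literature (unconditionally via a reworked type I/II analysis, and under GRH via the explicit formula with an oscillatory-integral estimate), but they are theorems requiring proof or a precise citation, not ``standard'' facts one may wave through; as written, this is the gap.

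The paper's proof shows how to avoid needing any such refinement. It keeps the crude inputs \eqref{4.3}--\eqref{4.4} (respectively \cite[Theorem 3.1]{hlm} unconditionally) and neutralises the partial-summation loss by choosing the rational approximation \emph{adaptively}: for $\alpha\in\mathfrak P(Y)=\mathfrak M(2Y)\setminus\mathfrak M(Y)$, apply Dirichlet with parameter $n/Y$ to obtain $b/r$ with $r\le n/Y$ and $|r\alpha-b|\le Y/n$; since $\alpha\notin\mathfrak M(Y)$ one has $r>Y$, so $n|\alpha-b/r|\le Y/r\le 1$ and the factor $1+n|\beta|$ in \eqref{4.4} is $O(1)$, while $n/\phi(r)+\sqrt{nr}\ll nY^{-1/2}L$ because $Y<r\le n/Y$. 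This gives $g(\alpha)\ll nY^{-1/2}L^2\ll n\Upsilon(\alpha)^{1/2}L^2$ on each dyadic shell, and on $[0,1]\setminus\mathfrak M(\tfrac12\sqrt n)$ the same Dirichlet step forces $r>\tfrac12\sqrt n$ and yields $n^{3/4}L^2$ (unconditionally, $n^{4/5}L^4$), exactly the role you assign to the middle term. Your concluding bookkeeping (matching the Dirichlet fraction with the fraction defining $\Upsilon$) is fine, but to make the proposal complete you must either prove or properly cite the two-parameter estimates, or else adopt this dyadic re-approximation device.
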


\begin{proof} Both bounds \eqref{4.1} and \eqref{4.2} are certainly familiar, but perhaps 
not in this form, and in particular the latter is not easily found in the literature. We therefore 
provide details for the upper bound \eqref{4.2}. Let
\[
g(\alpha,\nu)=\sum_{p\le \nu}e(\alpha p)\log p .
\]
If we suppose that GRH holds, and $a\in\mathbb Z$ and $q\in\mathbb N$ are coprime, 
then one finds from \cite[Lemma 2]{BP}, for example, that
\begin{equation}\label{4.3}
g(a/q,\nu)\ll (\nu \phi(q)^{-1}+\sqrt{\nu q})(\log \nu)^2.
\end{equation}
Note that $g(\alpha)=g(\alpha,n)$. By partial summation, for all $\beta\in\mathbb R$, one 
has
\begin{align}
|g(\beta+a/q)|&\le |g(a/q)|+2\pi |\beta| \int_2^n |g(a/q,\nu)|\,\mathrm d\nu \notag\\
& \ll (n\phi(q)^{-1}+\sqrt{nq}) (1+n|\beta|) L^2.\label{4.4} 
\end{align}

In order to obtain the bound \eqref{4.2}, first suppose that 
$\alpha\in[0,1]\setminus \mathfrak M(\frac12 \sqrt n)$. Apply Dirichlet's theorem on 
Diophantine approximation to find integers $b$ and $r$ with $1\le r\le 2\sqrt n$, $(b,r)=1$ 
and $|r\alpha-b|\le 1/(2\sqrt n)$. Then the hypothesis 
$\alpha\not\in \mathfrak M(\frac12 \sqrt n)$ implies that $r>\frac12 \sqrt n$, and thus we 
infer from \eqref{4.4} that
\[
g(\alpha)\ll \Big( \frac{n}{\phi(r)}+\sqrt{nr}\Big)\Big(1+\frac{\sqrt n}{r}\Big) L^2 \ll 
n^{3/4}L^2,
\]
as required in \eqref{4.2}.\par

Next suppose that $\alpha\in\mathfrak M(\frac12\sqrt n)$. For 
$\frac12\le Y \le \frac14 \sqrt n$, we write
\begin{equation}\label{4.5}
\mathfrak P(Y)=\mathfrak M(2Y)\setminus \mathfrak M(Y).
\end{equation}
Then since $\mathfrak M(\frac12\sqrt n)$ is the union of the sets $\mathfrak P(Y)$, there 
exists a choice for $Y$ in this range with $\alpha\in \mathfrak P(Y)$. Note that for $\alpha$ 
in the latter set, the bound \eqref{4.2} asserts that $g(\alpha)\ll nY^{-1/2}L^2$. To prove 
this bound, we again apply Dirichlet's theorem to find integers $b$ and $r$ with 
$1\le r\le  n/Y$, $(b,r)=1$ and $|r\alpha-b|\le Y/n$. We may suppose in this instance that 
$\alpha\not\in \mathfrak M(Y)$, and hence that $r>Y$, and thus \eqref{4.4} yields the 
desired bound
\[
g(\alpha)\ll \Big( \frac{n}{\phi(r)} + nY^{-1/2}\Big)\Big(1+\frac{Y}{r}\Big) L^2\ll 
nY^{-1/2}L^2.
\]

\par The proof of the bound \eqref{4.2} is now complete. The proof of \eqref{4.1} follows 
in the same way, save that the initial estimate \eqref{4.3} has to be replaced by 
Vinogradov's unconditional bound \cite[Theorem 3.1]{hlm}.
\end{proof}

We are now well prepared for the pruning.

\begin{lemma}\label{lemma4.3}
Suppose that $s\ge k+3$ and $2\le R\le  P^{1/7}$. Then
\[
I(n,{\mathfrak L}\setminus \mathfrak N) \ll n^{s/k}(\log n)^{-1/(50k)}.
\]
\end{lemma}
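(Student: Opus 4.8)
The plan is to estimate $I(n,\mathfrak L\setminus\mathfrak N)$ by decomposing the region $\mathfrak L\setminus\mathfrak N=\mathfrak M(P^{1/5})\setminus\mathfrak N$ into dyadic pieces according to the size of the parameter $\Upsilon(\alpha)$. More precisely, for $\alpha\in\mathfrak L\setminus\mathfrak N$ we have $\Upsilon(\alpha)=(q+n|q\alpha-a|)^{-1}$ with $q\le P^{1/5}$, and since $\alpha\notin\mathfrak N$ we have $\Upsilon(\alpha)\le\mathscr Q^{-1}=(\log n)^{-1/99}$, while trivially $\Upsilon(\alpha)\gg P^{-1/5}\mathscr Q\gg n^{-1/(5k)}$ (using $q\le P^{1/5}$ and $n|q\alpha-a|\le P^{1/5}P^{-k}\cdot n=P^{1/5}$ on $\mathfrak M(P^{1/5})$). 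Cover this range by sets $\mathfrak K(T)=\{\alpha\in\mathfrak L\setminus\mathfrak N:T<\Upsilon(\alpha)^{-1}\le 2T\}$ with $\mathscr Q\le T\le n^{1/(5k)}$ running over a dyadic sequence. The number of such $T$ is $O(L)$, which costs only a log.

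On each $\mathfrak K(T)$ the plan is to bound the integrand pointwise and then control the measure. From Lemma \ref{lemma4.1} we have $f(\alpha)\ll PL^3\Upsilon(\alpha)^{1/(2k)-\eps}\ll PL^3 T^{-1/(2k)+\eps}$, and from Lemma \ref{lemma4.2} (the unconditional bound \eqref{4.1}) we have $g(\alpha)\ll(n\Upsilon(\alpha)^{1/2}+n^{4/5})L^4\ll(nT^{-1/2}+n^{4/5})L^4$; since $T\le n^{1/(5k)}\le n^{2/5}$ the first term dominates, giving $g(\alpha)\ll nT^{-1/2}L^4$. For the measure, a standard counting argument shows that the set of $\alpha\in[0,1]$ with $\Upsilon(\alpha)^{-1}\le 2T$ has measure $O(T^2/n)$ (there are $O(T^2)$ Farey fractions $a/q$ with $q\le 2T$, each contributing an interval of length $O(T/(qn))$, and summing $\sum_{q\le 2T}T/(qn)\cdot q=O(T^2/n)$). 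Thus $\mathrm{meas}(\mathfrak K(T))\ll T^2/n$. To exploit the remaining $s-k-1$ powers of $f$ beyond what the pointwise bound uses, I would split: keep $k+1$ factors (say) for the pointwise bound — actually it is cleaner to use all $s$ factors pointwise together with the measure estimate, but that wastes the extra powers. The better route is to hold back factors for an $L^2$/mean-value step: write
\[
\int_{\mathfrak K(T)}|g(\alpha)f(\alpha)^s|\,\mathrm d\alpha\ll\Bigl(\sup_{\mathfrak K(T)}|g(\alpha)f(\alpha)^{s-u}|\Bigr)\int_{\mathfrak K(T)}|f(\alpha)|^u\,\mathrm d\alpha,
\]
for a suitable real $u\ge k+1$, and bound the last integral by $V_u(P,R,2T)\ll P^{u-k+\eps}T^{2\Delta_u/k}$ using Lemma \ref{lemma2.3}, where $\Delta_u=k\Eta(u/k)$ is the admissible exponent from Lemma \ref{lemma2.2} (extended to real $u$ by monotonicity/interpolation, or simply take $u$ an even integer plus adjustment). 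Combining, and using $s-u\ge k+3-u$... the exponents of $P$ and $n$ will match the target $n^{s/k}$ exactly, with a surplus power of $T$ of the shape $T^{2\Delta_u/k-(s-u)/(2k)+\cdots}$ that is negative once $s\ge k+3$, forcing the dyadic sum over $T\ge\mathscr Q$ to converge to $O(\mathscr Q^{-\kappa})$ for some $\kappa>0$, hence $O(L^{-1/(50k)})$ after choosing constants.

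The main obstacle is the bookkeeping of exponents: one must choose $u$ (the number of $f$-factors handed to the major-arc mean value $V_u$) so that, simultaneously, (i) $u\ge k+1$ so Lemma \ref{lemma2.3} applies, (ii) the power of $n$ coming from $g$ times the power of $P$ from $f^{s-u}$ times $P^{u-k}$ from $V_u$ equals $n^{s/k}$ up to $T$-powers, and (iii) the total exponent of $T$, namely $\frac{2\Delta_u}{k}-\frac{s-u}{2k}-\frac{1}{2}\cdot[\text{from }g]+\cdots$, is strictly negative at the critical window $T\asymp\mathscr Q$. Because $\mathscr Q$ is only a small power of $L$, the negative $T$-exponent $-\kappa$ yields a saving $\mathscr Q^{-\kappa}=L^{-\kappa/99}$, so one needs $\kappa$ bounded below by an absolute multiple of $1/k$; tracking this requires the inequality $\Eta(u/k)>1/(4u/k-1)$ type bound, i.e. Lemma \ref{lemma2.1}, to guarantee $2\Delta_u<\tfrac12(s-u)$ holds with room to spare when $s\ge k+3$. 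The edge cases $T\asymp\mathscr Q$ (where \eqref{4.1}'s secondary term $n^{4/5}$ is genuinely negligible) and $T\asymp n^{1/(5k)}$ (the top of $\mathfrak L$) need to be checked to confirm the dyadic sum is dominated by its smallest-$T$ term. Finally, one absorbs the $O(L^{C})$ factors from the various $L^3,L^4$ powers into the small saving by noting $\mathscr Q=L^{1/99}$ can be replaced by a larger fixed power of $L$ at the outset if needed, or simply by observing $L^C\cdot L^{-\kappa/99}\ll L^{-1/(50k)}$ once $\kappa$ is large enough relative to $k$, which the analysis delivers.
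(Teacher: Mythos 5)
Your key deferred step---``the surplus power of $T$ \ldots is negative once $s\ge k+3$''---is the point at which the argument fails, and it fails decisively. On your level set $\Upsilon(\alpha)^{-1}\asymp T$ the ingredients you list give
\[
\int_{\mathscr K(T)}|g(\alpha)f(\alpha)^s|\,\mathrm d\alpha
\ll nT^{-1/2}L^4\bigl(PL^3T^{-1/(2k)+\eps}\bigr)^{s-u}P^{u-k+\eps}T^{2\Delta_u/k}
\ll P^{s+\eps}L^{O(1)}T^{e(u)},
\]
where $e(u)=2\Delta_u/k-\tfrac12-(s-u)/(2k)$. Since Lemma \ref{lemma2.3} requires $u\ge k+1$ and the lemma must be proved at $s=k+3$, you have $s-u\le 2$, so $(s-u)/(2k)\le 1/k$; meanwhile the best admissible exponents available (Lemma \ref{lemma2.2}) give $\Delta_u=k\Eta(u/k)$ with $\Eta(u/k)$ near $\Eta(1)=0.567\ldots$, so $2\Delta_u/k\approx 1.13$ and $e(u)\approx 0.6>0$ (taking $u=s$ is no better, since that would need $\Delta_s<k/4$, i.e.\ $s>2.1k$, and your parenthetical target ``$2\Delta_u<\tfrac12(s-u)$'' is off by a factor of order $k$, as $2\Delta_u\asymp k$ while $s-u\le 2$). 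With a positive $T$-exponent the dyadic sum is dominated by the top range $T\asymp P^{1/5}$ and yields a bound like $P^{s+e(u)/5+\eps}$, which loses a power of $n$ rather than saving a power of $\log n$; this mechanism only becomes effective when $s$ is of order $2k$, which is exactly how it is deployed on $\mathfrak K\setminus\mathfrak L$ in Sections 6--7, not on $\mathfrak L$. There is a second, independent obstruction: Lemma \ref{lemma2.3}, like every estimate routed through admissible exponents, carries a factor $P^\eps$, and in the critical window $T\asymp\mathscr Q=(\log n)^{1/99}$ the entire available saving is a power of $\log n$, so even a negative $T$-exponent could not absorb $P^\eps$. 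This is precisely why the paper's Section 4 is engineered to be free of admissible exponents. (A further, smaller point: your route would prove the bound only for $2\le R\le P^\eta$, not in the stated range $2\le R\le P^{1/7}$.)

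The paper instead splits $\mathfrak L\setminus\mathfrak N$ at height $L^B$ with $B=6k(8+3k)$. On $\mathfrak L\setminus\mathfrak M(L^B)$ it applies the pointwise bounds $g(\alpha)\ll(n\Upsilon^{1/2}+n^{4/5})L^4$ and $f(\alpha)\ll PL^3\Upsilon^{1/(2k)-\eps}$ to the block $g(\alpha)f(\alpha)^{k+1}\ll P^{k+1}n\Upsilon^{1+1/(3k)}L^{7+3k}$, estimates $|f|^{s-k-3}$ trivially, and then uses the $\eps$-free weighted mean value $\int_{\mathfrak L}\Upsilon(\alpha)^{1+1/(6k)}|f(\alpha)|^2\,\mathrm d\alpha\ll P^2n^{-1}$ (the enhanced form \cite[Lemma 11.1]{PW2014} of \cite[Lemma 1]{B89}); the surplus $\Upsilon^{1/(6k)}\ll L^{-(8+3k)}$ kills the logarithm powers, and the two reserved factors of $f$ carry the full saving of $n$. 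On $\mathfrak M(L^B)\setminus\mathfrak N$ it uses $g(\alpha)\ll n\Upsilon(\alpha)\log L$ together with the sharper major-arc bound $f(\alpha)\ll P\Upsilon(\alpha)^{1/k-\eps}$ of Lemma \ref{lemma4.1}, notes $\Upsilon(\alpha)\ll L^{-1/99}$ off $\mathfrak N$, and integrates $\Upsilon^{2+1/k}$ directly to get $P^sL^{-1/(50k)}$. The moral is that pruning down to the core arcs with only $k+3$ smooth Weyl sums must rest on mean values that save the full factor $n$ without any $P^\eps$ loss, which your use of Lemma \ref{lemma2.3} cannot provide.
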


\begin{proof} We proceed in two steps. We first note that an enhanced version (see 
\cite[Lemma 11.1]{PW2014}) of the first author's pruning technique 
\cite[Lemma 1]{B89} yields
\[
\int_{\mathfrak L}\Upsilon(\alpha)^{1+1/(6k)}|f(\alpha)|^2\,\mathrm d\alpha \ll 
P^2n^{-1}.
\] 
By \eqref{4.1} and Lemma \ref{lemma4.1}, we have
\[
g(\alpha)f(\alpha)^{k+1}\ll P^{k+1}n\Upsilon(\alpha)^{1+1/(3k)}L^{7+3k}.
\]
Take $B=6k(8+3k)$. Then, for $\alpha\in\mathfrak L\setminus \mathfrak M(L^B)$, one has 
$\Ups(\alpha)\ll L^{-B}$, whence
\begin{equation}\label{4.6}
I(n,\mathfrak L\setminus \mathfrak M(L^B))\ll L^{-1}P^{s-2}n 
\int_{\mathfrak L} \Upsilon(\alpha)^{1+1/(6k)}|f(\alpha)|^2\,\mathrm d\alpha 
\ll P^sL^{-1}.
\end{equation}
For $\alpha\in\mathfrak M(L^B)$, one finds from \cite[Lemma 3.1]{hlm} and familiar 
estimates that
\[
g(\alpha) \ll n\Upsilon(\alpha) \log L.
\]
Observe that when $\alpha\in [0,1]\setminus \mathfrak N$, one has 
$\Upsilon(\alpha)\ll L^{-1/99}$. Consequently, by making use of the second estimate of 
Lemma \ref{lemma4.1}, we discern that whenever 
$\alpha \in \mathfrak M(L^B)\setminus \mathfrak N$, then
\[
g(\alpha)f(\alpha)^s\ll P^s n \Upsilon(\alpha)^{2+1/k}L^{-1/(50k)}.
\]
We may therefore integrate routinely to conclude that
\begin{equation}\label{4.7}
I(n,\mathfrak M(L^B)\setminus \mathfrak N) \ll P^s L^{-1/(50k)}.
\end{equation}
The proof of the lemma is completed by collecting together \eqref{4.6} and \eqref{4.7}.
\end{proof} 

\section{Interlude}
Thus far our evaluation of $r_{k,s}(n)$ has followed a well trodden path. Before embarking 
on the original aspects of our treatment, we pause to outline a simple argument leading 
to weaker versions of Theorems \ref{thm1.1} and \ref{thm1.3}, with inflated numerical 
values for the constants $c$ and $c'$. This approach rests on Lemma \ref{lemma2.3} 
alone, and is therefore very flexible. It applies also in other contexts.\par

From now on, in the remainder of this paper, we apply the following conventions concerning 
the symbols $\varepsilon$, $\eta$ and $R$. If a statement involves the letter $\varepsilon$, 
then it it is asserted that the statement holds for any positive real number assigned to 
$\varepsilon$. Implicit constants stemming from Vinogradov or Landau symbols may depend 
on $\varepsilon$. If a statement also involves the letter $R$, either implicitly or explicitly, 
then it is asserted that for any $\varepsilon>0$ there is a number $\eta>0$ such that the 
statement holds uniformly for $2\le R\le P^\eta$. This will be imported to our arguments 
through applications of Lemmata \ref{lemma2.2} and \ref{lemma2.3} only. We shall call 
upon these lemmata only finitely often, and we may therefore pass to the smallest of the 
numbers $\eta$ that arise in this way, and then have all estimates with the same positive 
$\eta$ in hand.\par

The Farey dissection that we shall use takes $\mathfrak K = \mathfrak M(n^{2/5})$ as the 
set of major arcs, and $\mathfrak k= [0,1]\setminus \mathfrak K$ as the complementary 
set of minor arcs. Let $c_1$ be the positive real number with $\Eta(c_1)=\frac15$. Note 
from \eqref{2.1} that
\[
c_1=\frac45+\log 5=2.409437\ldots.
\]
Let $s_1=s_1(k)$ be the smallest even integer with $s_1>c_1k$. Then, for $s\ge s_1$ we 
have $\Eta(s/k)<\frac15$. By Lemma \ref{lemma2.2} and the definition of an admissible 
exponent, one sees that for some positive number $\delta$, one has
\begin{equation}\label{5.1}
\int_0^1 |f(\alpha)|^s\,\mathrm d\alpha \ll P^sn^{-4/5-\delta}.
\end{equation}
Also, as a consequence of Lemma \ref{lemma2.3}, whenever $1\le Q\le \frac12 \sqrt n $, 
we have
\begin{equation}\label{5.2}
\int_{\mathfrak M(Q)} |f(\alpha)|^s\,\mathrm d\alpha 
\ll P^sn^{\varepsilon-1}Q^{2/5-\delta}.
\end{equation}
Note here that our conventions concerning the use of $\varepsilon$ and $R$ apply. In 
particular, we may choose $R=P^\eta$ and then the upper bounds \eqref{5.1} and 
\eqref{5.2} both hold provided that $\eta>0$ is sufficiently small. We fix this choice of $R$ 
now.\par
 
We begin by establishing a version of Theorem \ref{thm1.1}. Observe first that when 
$\alpha \in \mathfrak k$, then as a consequence of Dirichlet's approximation theorem, 
whenever $a\in \mathbb Z$ and $q\in \mathbb N$, one has $q+n|q\alpha -a|>n^{2/5}$. 
Hence, by Lemma \ref{lemma4.2}, we have $g(\alpha)\ll n^{4/5+\varepsilon}$. We thus 
deduce from \eqref{3.2} and \eqref{5.1} that
\begin{equation}\label{5.3}
I(n,\mathfrak k)\ll P^s n^{-\delta/2}.
\end{equation}
Next recall \eqref{4.5} and apply a dyadic dissection to cover the set 
$\mathfrak K\setminus\mathfrak L$ by $O(\log n)$ sets $\mathfrak P(Q)$, with 
$P^{1/5}\le Q\le \frac12n^{2/5}$. For $\alpha\in \mathfrak P(Q)$, we infer from Lemma 
\ref{lemma4.2} that $g(\alpha)\ll n^{1+\varepsilon}Q^{-1/2}$. Hence, by \eqref{3.2} and 
\eqref{5.2}, we discern that
\[
I(n,\mathfrak P(Q))\ll P^s Q^{-1/10}.
\]
Collecting together the contributions from these dyadic intervals, we see that
\begin{equation}\label{5.4}
I(n,\mathfrak K\setminus\mathfrak L)\ll P^s n^{-1/(60k)}.
\end{equation}
Thus, on recalling Lemmata \ref{lemma3.1} and \ref{lemma4.3}, we deduce from 
\eqref{5.3} and \eqref{5.4} that
\[
I(n)=I(n,\mathfrak k)+I(n,\mathfrak K\setminus\mathfrak L)+
I(n,\mathfrak L\setminus \mathfrak N)+I(n,\mathfrak N)\gg n^{s/k}.
\]
In view of \eqref{3.3}, we therefore have $r_{k,s}(n)\gg n^{s/k}(\log n)^{-1}$ whenever 
$s\ge c_1k+1$, thereby delivering a version of Theorem \ref{thm1.1} with $c_1$ in place 
of $c$.\par

If GRH is true we adjust the Farey dissection to one comprising the sets 
$\mathfrak K'=\mathfrak M(\frac12\sqrt n)$ and 
$\mathfrak k'=[0,1]\setminus \mathfrak K'$. Also, we cover the set 
$\mathfrak K'\setminus \mathfrak L$ by $O(\log n)$ sets $\mathfrak P(Q)$, with 
$P^{1/5}\le Q\le \tfrac{1}{4}\sqrt{n}$. In this scenario we take
\[
c'_1=\frac34 +\log 4=2.136294\ldots ,
\]
so that in view of \eqref{2.1}, one has $\Eta(c'_1)=\frac14$. Let $s'_1=s'_1(k)$ be the 
smallest even integer with $s'_1>c'_1 k$. The preceding argument may now be based on 
\eqref{4.2} rather than \eqref{4.1}, and then shows that 
$r_{k,s}(n)\gg n^{s/k}(\log n)^{-1}$ whenever $s\ge c'_1k+1$, yielding a version of 
Theorem \ref{thm1.3} with $c'_1$ in place of $c'$. It is perhaps of interest to note that 
even this conditional conclusion is improved on by Theorem \ref{thm1.1}.\par

Applications of the Hardy-Littlewood method to the problem at hand start with a minor arc 
analysis associated with the set $[0,1]\setminus \mathfrak M(Q)$, with a choice for $Q$ 
large enough that estimates of Weyl-type are applicable. Traditional pruning arguments 
attempt to reduce the size parameter $Q$ so that the associated complementary set of 
major arcs $\mathfrak M(Q)$ becomes workable. Ideally, this parameter $Q$ should not be 
very large, and indeed $Q=P^{1/5}$ in the set $\mathfrak L$ occurring in our argument 
above. We refer to $Q$ as the {\em height} of the major arcs, and a pruning argument 
that reduces $Q$ is referred to as {\em pruning by height}. There are several models for 
this kind of argument where one would typically combine pointwise bounds for some 
generating functions with mean values of others. One standard tool is \cite[Lemma 1]{B89}. 
It transpires that Lemma \ref{lemma2.3} is a particularly powerful technique for pruning by 
height because it is initially based on mean values alone.\par

In the next section we introduce another new pruning device. In its pure form it is a minor 
arc technique. We propose a dissection into level sets for several generating functions. If a 
generating function is very large, then inequalities of Weyl's type will tell us that we are on 
major arcs. We shall then explore via mean values the proposition that a generating 
function is large but not very large. In favourable circumstances one ends up with results 
that have, for the problem at hand, the same effect as an improvement on the Weyl 
bounds. We refer to this process as {\em pruning by size}. In our application, the new 
method performs so well that pure pruning by height on the major arcs would become the 
bottleneck for the argument. Hence, in the section following the next one, we describe an 
argument where pruning by height is enhanced by some of the features of pruning by size. 
It is interesting to note already at this stage that the success of pruning by size and by 
height ultimately depends on the same inequalities for certain admissible exponents. It is 
possible to announce the principal conclusion in a form that absorbs potential improvements 
on admissible exponents easily. This concerns the contribution from the set
\[
\mathfrak l = [0,1]\setminus \mathfrak L.
\]

\begin{lemma}\label{lemma5.1} Let $k\ge 3$. Suppose that $s$ and $t$ are real numbers 
with $0\le t\le s$, and let $\Delta_s$ and $\Delta_{s+t}$ be admissible exponents. Should 
GRH be true, set $\theta=4$, and otherwise put $\theta=5$. Then, provided that
\begin{equation}\label{5.5}
2\Delta_s <k\quad \text{and}\quad \frac{t}{s} + \frac{\theta\Delta_{s+t}}{k} < 1,
\end{equation}
there exists a positive number $\eta$ such that, uniformly for $2\le R\le P^\eta$, one has
\[
\int_{\mathfrak l} |g(\alpha) f(\alpha)^s|\,\mathrm d\alpha \ll P^s(\log n)^{-1}.
\]
\end{lemma}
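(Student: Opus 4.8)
The plan is to estimate the contribution of $\mathfrak l$ piece by piece, after the dissection
\[
\mathfrak l=\mathfrak m\cup\bigl(\mathfrak M(\tfrac12\sqrt n)\setminus\mathfrak L\bigr),
\qquad \mathfrak m=[0,1]\setminus\mathfrak M(\tfrac12\sqrt n),
\]
where $\mathfrak m$ is the set of genuine minor arcs, and where the intermediate region $\mathfrak M(\tfrac12\sqrt n)\setminus\mathfrak L$ is covered by $O(\log n)$ dyadic annuli $\mathfrak P(Q)=\mathfrak M(2Q)\setminus\mathfrak M(Q)$ with $P^{1/5}\le Q\ll\sqrt n$, on each of which $\Upsilon(\alpha)\asymp Q^{-1}$. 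The one device used throughout is a H\"older split of $|g|\,|f|^s$ with conjugate exponents $(s+t)/s$ on $|f|^s$ and $(s+t)/t$ on $|g|$, designed so that the \emph{higher} moment of $f$ — hence the \emph{smaller} admissible exponent $\Delta_{s+t}$ — carries the load: one is reduced to $\int|f|^{s+t}$ (handled by Lemma \ref{lemma2.2} globally, or by Lemma \ref{lemma2.3} over $\mathfrak M(2Q)$) and to $\int|g|^{(s+t)/t}$, the latter estimated via the elementary inequality $\int|g|^{(s+t)/t}\le(\sup|g|)^{(s+t)/t-2}\int_0^1|g|^2$ together with $\int_0^1|g|^2\ll n\log n$ and the pointwise bounds of Lemma \ref{lemma4.2}. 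When $t=0$ this degenerates to $\sup|g|\cdot\int_0^1|f|^s$, and one uses the $t=0$ case $\theta\Delta_s/k<1$ of the hypothesis directly.

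On the genuine minor arcs $\Upsilon(\alpha)=0$, so $g(\alpha)\ll n^{1-1/\theta+\varepsilon}$ by Lemma \ref{lemma4.2}. Feeding $\sup_{\mathfrak m}|g|\ll n^{1-1/\theta+\varepsilon}$ and $\int_0^1|g|^2\ll n\log n$ into the factor involving $g$, and $\int_0^1|f|^{s+t}\ll P^{s+t-k+\Delta_{s+t}+\varepsilon}$ into the other, a short computation gives $\int_{\mathfrak m}|g(\alpha)f(\alpha)^s|\,\mathrm d\alpha\ll P^{s-\kappa+\varepsilon}$ with $\kappa=\tfrac1{s+t}\bigl(\tfrac{k(s-t)}{\theta}-s\Delta_{s+t}\bigr)$, and the positivity of $\kappa$ is exactly the second hypothesis $\tfrac ts+\tfrac{\theta\Delta_{s+t}}{k}<1$. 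Since $\varepsilon>0$ is at our disposal — with $\eta$ shrunk accordingly, under the conventions of this section — this is $\ll P^s(\log n)^{-1}$.

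On a dyadic annulus $\mathfrak P(Q)$ of the intermediate region one has $g(\alpha)\ll\max\{nQ^{-1/2},\,n^{1-1/\theta}\}(\log n)^{O(1)}$ by Lemma \ref{lemma4.2}, while Lemma \ref{lemma2.3} gives $\int_{\mathfrak M(2Q)}|f|^{s+t}\ll P^{s+t-k+\varepsilon}Q^{2\Delta_{s+t}/k}$, legitimately since $s+t\ge k+1$ and $2Q\ll P^{k/2}$. Running the same H\"older split inside $\mathfrak P(Q)$, and bounding $\int_{\mathfrak P(Q)}|g|^2$ by $\int_0^1|g|^2\ll n\log n$, produces $\int_{\mathfrak P(Q)}|g f^s|\ll P^{s+\varepsilon}Q^{e}(\log n)^{O(1)}$ with an explicit exponent $e=e(s,t,k,\theta)$ of the local scale $Q$. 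On the inner part $P^{1/5}\le Q\le n^{2/\theta}$, where $g$ may be as large as $nQ^{-1/2}$, one finds $e<0$ precisely when $4s\Delta_{s+t}<k(s-t)$, and the worst case $Q\asymp P^{1/5}$ then yields a power saving; on the outer part $n^{2/\theta}\le Q\ll\sqrt n$, where $g\ll n^{1-1/\theta}$, the worst case is $Q\asymp\sqrt n$ and one needs $\theta s\Delta_{s+t}<k(s-t)$, i.e. again $\tfrac ts+\tfrac{\theta\Delta_{s+t}}{k}<1$ — and for $\theta=5$ the inner inequality is a consequence of this, while for $\theta=4$ the two coincide. The first hypothesis $2\Delta_s<k$ enters here: it guarantees that $s$ is large enough for Lemma \ref{lemma2.3} to apply with admissible exponents throughout (in particular $s+t\ge k+1$, and $Q^{2\Delta_s/k}\le Q$ up to the outer scale), and, through $2\Delta_s/k<1$, that the part of each annulus on which $|f|$ comes close to its Weyl-type bound $\asymp PQ^{-1/(2k)}$ contributes negligibly — the ``large but not very large'' mechanism at the heart of the method. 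Summing over the $O(\log n)$ annuli loses only a logarithm, comfortably absorbed by the power saving $P^{-\kappa}$.

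The routine part is the genuine minor arcs, where the H\"older identity does essentially all the work. The crux is the intermediate region: one must arrange the interplay between the major-arc mean value of Lemma \ref{lemma2.3} (``pruning by height'') and the H\"older/large-values split (``pruning by size'') so that \emph{neither} extreme is fatal — not the inner annuli, where $g$ is largest but $Q$, hence the $|f|^{s+t}$-mass on $\mathfrak M(2Q)$, is smallest, \emph{nor} the outer annuli, where $Q\asymp\sqrt n$ makes that mass maximal but forces $g\ll n^{1-1/\theta}$. Getting these two effects to balance so that exactly the stated pair of inequalities suffices is the delicate point, together with the attendant bookkeeping of the $(\log n)^{O(1)}$ and $P^\varepsilon$ factors and the verification of the side conditions $0\le t\le s$ and $s+t\ge k+1$ under the hypotheses.
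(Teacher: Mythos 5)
Your argument is correct, but it proceeds by a genuinely different mechanism from the paper's. The paper never invokes a H\"older split with exponents $(s+t)/s$ and $(s+t)/t$: instead it first removes the portions where $g$ is tiny (the sets $\mathscr T$ and $\mathscr S$, and this is the \emph{only} place the first condition $2\Delta_s<k$ is used), then dissects what remains into level sets of $|g|$ ($\mathscr L(U)$, $\mathscr K(V)$), and within each level set splits according to whether $|f|^s$ exceeds a threshold: on the small-$|f|$ part it uses Chebyshev in the form $\int_{\mathscr L(U)}|g|\le \frac{U}{n}\int_0^1|g|^2\ll UL$, and on the large-$|f|$ part it raises the moment from $s$ to $s+t$ and uses the pointwise bound $|g|\ll n/U$ — the ``pruning by size'' device the authors advertise. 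Your single H\"older step, with $\int|g|^{(s+t)/t}$ interpolated between $\sup|g|$ and $\int_0^1|g|^2\ll n\log n$, packages the same three inputs (the $L^2$ bound for $g$, the pointwise bounds of Lemma \ref{lemma4.2}, and the $(s+t)$-moment of $f$ globally or over $\mathfrak M(2Q)$ via Lemma \ref{lemma2.3}) and, as your exponent computations show, reproduces exactly the second condition in \eqref{5.5} on the genuine minor arcs and on both the inner and outer annuli; your choice of cutoff $\tfrac12\sqrt n$ in both the GRH and unconditional cases (the paper uses $n^{2/5}$ when $\theta=5$) is equally workable since the outer annuli then satisfy $g\ll n^{1-1/\theta}L^{O(1)}$. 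What each approach buys: yours is shorter and in fact never needs $2\Delta_s<k$ at all — your stated rationale for where that hypothesis enters is inaccurate for your own argument, since your H\"older computation involves only $\Delta_{s+t}$ and there is no separate ``$|f|$ near its Weyl bound'' step; this is harmless, as the hypothesis is available and you prove (slightly more than) the stated lemma. The paper's level-set formulation, on the other hand, is the flexible large-values technique the authors wish to isolate for later use. Two small bookkeeping points you should fix: stop the dyadic annuli at $Q\le\tfrac14\sqrt n$ so that $2Q\le\tfrac12 P^{k/2}$ as required in Lemma \ref{lemma2.3}, and note that $s+t\ge k+1$ does follow from \eqref{5.5} because any admissible exponent obeys $\Delta_{s+t}\ge k-(s+t)/2$ while the second condition forces $\Delta_{s+t}<k/\theta$, whence $s+t>3k/2$.
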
   

\section{Pruning by size}
We prove Lemma \ref{lemma5.1} in two steps. In this section we deal with the minor arcs 
$\mathfrak k$ and, subject to GRH, the alternative minor arcs $\mathfrak k'$. Throughout 
we suppose that the hypotheses in Lemma \ref{lemma5.1} are met, with $\theta$ 
depending on and also determining the case under consideration.\par

We begin by removing from the minor arcs the set
\[
\mathscr T=\{\alpha\in \mathfrak k:  |g(\alpha)|\le \sqrt n\},
\]
where $g(\alpha)$ is tiny. By \eqref{5.5} and the definition of an admissible exponent, there 
is a number $\delta>0$ with
\[
\int_0^1 |f(\alpha)|^s\,\mathrm d\alpha \ll P^s n^{-1/2-\delta},
\]
whence
\begin{equation}\label{6.1}
\int_{\mathscr T}|g(\alpha)f(\alpha)^s|\,\mathrm d\alpha \ll P^s n^{-\delta}.
\end{equation}

Next, let $U$ be a parameter with $1\le U\le \sqrt n$, and define the level set
\begin{equation}\label{6.2}
{\mathscr L}(U)=\{\alpha\in [0,1]: n/U\le |g(\alpha)|\le 2n/U\}. 
\end{equation}
In the interest of brevity we revive the use of $L=\log n$. Also, let $U_0=n^{1/\theta}$ 
and note that whenever $U\le U_0L^{-5}$, one must have $\alpha\in\mathfrak K$ (if 
$\theta=5$), or $\alpha\in\mathfrak K'$ (if $\theta=4$), at least for large $n$. This follows 
from Lemma \ref{lemma4.2}, which shows that in these respective cases one has
\[
\sup_{\alpha \in \mathfrak k}|g(\alpha)|\ll n^{4/5}L^4\quad \text{and}\quad 
\sup_{\alpha \in \mathfrak k'}|g(\alpha)|\ll n^{3/4}L^2.
\]
Consequently, we may cover the set $\mathfrak k\setminus \mathscr T$ by $O(L)$ sets 
${\mathscr L}(U)$ with
\begin{equation}\label{6.3}
n^{1/5}L^{-5}\le U\le \sqrt n. 
\end{equation}
Likewise, the set $\mathfrak k'$ is the union of $\mathscr T$ and $O(L)$ sets 
${\mathscr L}(U)$ with 
\begin{equation}\label{6.4}
n^{1/4}L^{-3}\le U\le \sqrt n . 
\end{equation}
Hence, we find via \eqref{6.1} that there is a number $U$ satisfying \eqref{6.3} for which
\begin{equation}\label{6.5}
\int_{\mathfrak k}|g(\alpha)f(\alpha)^s|\,\mathrm d\alpha \ll P^sn^{-\delta}
+L\int_{\mathscr L(U)} |g(\alpha)f(\alpha)^s|\,\mathrm d\alpha. 
\end{equation}
This bound also holds with $\mathfrak k'$ in place of $\mathfrak k$, but then $U$ lies in the 
narrower interval \eqref{6.4}.\par

By orthogonality and Chebychev's bound, 
\begin{equation}\label{6.6}
\int_{\mathscr L(U)}|g(\alpha)|\,\mathrm d\alpha \le \frac{U}{n} 
\int_0^1|g(\alpha)|^2\,\mathrm d\alpha \le UL. 
\end{equation}
This suggests the strategy of splitting the set $\mathscr L(U)$ into the subsets
\begin{align*}
\mathscr G&=\{\alpha\in \mathscr L(U): |f(\alpha)|^s\le P^sU^{-1}L^{-3}\},\\
\mathscr H&=\{\alpha\in{\mathscr L}(U): |f(\alpha)|^s > P^sU^{-1}L^{-3}\},
\end{align*}
since by \eqref{6.6}, we have
\begin{equation}\label{6.7}
\int_{\mathscr G}|g(\alpha)f(\alpha)^s|\,\mathrm d\alpha \ll P^sL^{-2}. 
\end{equation}
This leaves the set $\mathscr H$ for further discussion.\par

For $\alpha\in\mathscr H$ one has $|f(\alpha)|^t>P^t(UL^3)^{-t/s}$. Hence
\[
\int_{\mathscr H}|f(\alpha)|^s\,\mathrm d\alpha <P^{-t}U^{t/s}L^3 \int_0^1
|f(\alpha)|^{s+t}\,\mathrm d\alpha.
\]
Recalling our convention concerning the use of $\varepsilon$, we deduce via \eqref{6.2} 
that
\begin{equation}\label{6.8}
\int_{\mathscr H}|g(\alpha)f(\alpha)^s|\,\mathrm d\alpha
\ll U^{t/s -1}P^{s+\Delta_{s+t}+\varepsilon}.
\end{equation}
However, we have $U\ge U_0L^{-5}$, and furthermore $t/s-1$ is negative. Then by the 
second inequality in \eqref{5.5} we find that there is a positive number $\delta$ with 
\[
\int_{\mathscr H}|g(\alpha)f(\alpha)^s|\,\mathrm d\alpha
\ll U_0^{t/s -1}P^{s+\Delta_{s+t}+\varepsilon}\ll P^{s-\delta}.
\]
We now combine this bound with \eqref{6.5} and \eqref{6.7} and arrive at the final 
estimate
\begin{equation}\label{6.9}
\int_{\mathfrak k} |g(\alpha)f(\alpha)^s|\,\mathrm d\alpha \ll P^s L^{-1}, 
\end{equation}
valid when $\theta=5$, with the analogue for $\theta=4$ holding with $\mathfrak k'$ in 
place of $\mathfrak k$ subject to GRH.

\section{Pruning by height} We treat the intermediate arcs 
$\mathfrak K\setminus \mathfrak L$ and $\mathfrak K'\setminus \mathfrak L$ by a variant 
of the ideas developed in the previous section. We first slice these intermediate arcs into 
$O(L)$ pieces $\mathfrak P(Q)$, as defined in \eqref{4.5}, with $Q$ satisfying
\begin{equation}\label{7.1}
P^{1/5}\le Q\le \tfrac12 U_0^2. 
\end{equation}
Thus, when $\theta=5$, there is a value of $Q$ constrained by \eqref{7.1} with
\begin{equation}\label{7.2}
\int_{\mathfrak K\setminus\mathfrak L}|g(\alpha)f(\alpha)^s|\,\mathrm d\alpha 
\ll L\int_{\mathfrak P(Q)}|g(\alpha)f(\alpha)^s|\,\mathrm d\alpha .
\end{equation}
When $\theta=4$ the same conclusion holds, subject to GRH, with $\mathfrak K'$ replacing 
$\mathfrak K$.\par

We are ready to mimic the argument from the previous section, though we invoke Lemma 
\ref{lemma2.3} rather than Lemma \ref{lemma2.2}. We again suppose that the hypotheses 
in Lemma \ref{lemma5.1} are satisfied. In this setting the first inequality of \eqref{5.5} 
implies that for some positive number $\delta$, one has
\begin{equation}\label{7.3}
\int_{\mathfrak P(Q)} |f(\alpha)|^s\,\mathrm d\alpha \ll P^{s-k+\varepsilon} Q^{1-\delta}. 
\end{equation}
The role of the set $\mathscr T$ where $g(\alpha)$ is {tiny} is now played by the set
\[
{\mathscr S}=\{ \alpha\in\mathfrak P(Q): |g(\alpha)|\le n Q^{-1}\}.
\]
Thus, by virtue of \eqref{7.1} and \eqref{7.3}, we immediately have
\begin{equation}\label{7.4}
\int_{\mathscr S} |g(\alpha)f(\alpha)^s|\,\mathrm d\alpha\ll P^{s+\varepsilon}
Q^{-\delta} \ll P^{s-\delta/6}. 
\end{equation}

Next we observe that Lemma \ref{lemma4.2} supplies the bound 
$g(\alpha)\ll nL^4Q^{-1/2}$ for all $\alpha\in\mathfrak P(Q)$, provided that $Q$ satisfies 
\eqref{7.1}. Hence, the set of arcs $\mathfrak P(Q)$ is the union of $\mathscr S$ and 
$O(L)$ level sets
\begin{equation}\label{7.5}
\mathscr K(V)=\{\alpha\in\mathfrak P(Q): n/V\le |g(\alpha)|\le 2n/V\},
\end{equation}
with
\begin{equation}\label{7.6}
Q^{1/2}L^{-5}\le V\le Q, 
\end{equation}
at least for large $n$. In view of the bound \eqref{7.4}, there is consequently a value of 
$V$ satisfying \eqref{7.6} for which
\begin{equation}\label{7.7}
\int_{\mathfrak P(Q)} |g(\alpha)f(\alpha)^s|\,\mathrm d\alpha
\ll P^{s-\delta/6} + L \int_{{\mathscr K}(V)}  |g(\alpha)f(\alpha)^s|\,\mathrm d\alpha. 
\end{equation}

In the discussion of the previous section, pruning by size first removes from 
$\mathscr L(U)$ a portion $\mathscr G$ where $f(\alpha)$ was gentle enough to cooperate 
with the mean of $|g(\alpha)|$ over $\mathscr L(U)$, and the remaining set $\mathscr H$ 
required a slightly harder argument. In the present setting, again, the portion
\[
\mathscr E=\{\alpha\in{\mathscr K}(V): |f(\alpha)|^s\le P^sV^{-1}L^{-4}\}
\]
analogous to $\mathscr G$ will be equally easy, but its complement 
\[
\mathscr F=\{\alpha\in{\mathscr K}(V): |f(\alpha)|^s>P^sV^{-1}L^{-4}\}
\]
in $\mathscr K(V)$ is also straightforward to accommodate by arguing as in our earlier 
treatment of $\mathscr H$. Indeed, as in \eqref{6.7}, we now have
\begin{equation}\label{7.8}
\int_{\mathscr E} |g(\alpha)f(\alpha)^s|\,\mathrm d\alpha
\ll P^sL^{-3}. 
\end{equation}
Again following the initial steps of the treatment of $\mathscr H$, we find that
\[
\int_{\mathscr F} |f(\alpha)|^s\,\mathrm d\alpha \ll P^{-t}V^{t/s}L^4 
\int_{\mathfrak M(2Q)}|f(\alpha)|^{s+t}\,\mathrm d\alpha .
\]
Next, applying Lemma \ref{lemma2.3} together with \eqref{7.5}, we arrive at the bound
$$ \int_{\mathscr F}|g(\alpha)f(\alpha)^s|\,\mathrm d\alpha
\ll V^{t/s-1}P^{s+\varepsilon}Q^{2\Delta_{s+t}/k}. $$
This last bound is the counterpart of \eqref{6.8}. As before, we may suppose that $t/s-1$ is 
negative, whence \eqref{7.6} yields
\[
\int_{\mathscr F}|g(\alpha)f(\alpha)^s|\,\mathrm d\alpha
\ll P^{s+\varepsilon} Q^\omega,
\]
where
\[
\omega=\frac{1}{2}\Bigl(\frac{t}{s}-1\Bigr) +\frac{2\Delta_{s+t}}{k}
\le \frac{1}{2}\Bigl( \frac{t}{s}+\frac{\theta \Delta_{s+t}}{k}-1\Bigr).
\]
The second condition in \eqref{5.5} ensures that $\omega<0$. Hence, by \eqref{7.1}, it 
follows that there is a positive number $\delta>0$ for which
\begin{equation}\label{7.9}
\int_{\mathscr F}|g(\alpha)f(\alpha)^s|\,\mathrm d\alpha\ll P^{s-\delta}. 
\end{equation}

\par It remains to sum up the various contributions required to treat the integrals over 
$\mathfrak K\setminus \mathfrak L$ and $\mathfrak K'\setminus \mathfrak L$. By 
\eqref{7.7}, \eqref{7.8} and \eqref{7.9}, we see that
\[
\int_{\mathfrak P(Q)}|g(\alpha)f(\alpha)^s|\,\mathrm d \alpha \ll P^sL^{-2}.
\]
On substituting this estimate into \eqref{7.2}, we acquire the upper bound
\[
\int_{\mathfrak K\setminus\mathfrak L}|g(\alpha)f(\alpha)^s|\,\mathrm d\alpha \ll 
P^sL^{-1},
\]
valid when $\theta=5$, with the analogue for $\theta=4$ once again holding with 
${\mathfrak K'\setminus\mathfrak L}$ in place of ${\mathfrak K\setminus\mathfrak L}$ 
subject to GRH. Since $\mathfrak l=\mathfrak k\cup (\mathfrak K\setminus \mathfrak L)$, 
and likewise $\mathfrak l=\mathfrak k'\cup (\mathfrak K'\setminus \mathfrak L)$, this 
bound together with \eqref{6.9} proves Lemma \ref{lemma5.1}.  

\section{The theorems}
Our work performed thus far allows us to confirm the lower bound \eqref{1.2} subject to 
conditions that involve certain admissible exponents.

\begin{theorem}\label{thm8.1}
Let $k\ge 3$, and suppose that $s_0$, $t_0$ are real numbers satisfying both 
$0\le t_0\le s_0$ and the conditions \eqref{5.5} with $\theta=5$ and $(s,t) =(s_0,t_0)$. 
Then, whenever $s$ is a natural number with $s\ge \max\{s_0,k+3\}$, one has 
$r_{k,s}(n)\gg n^{s/k}/\log n$. If \eqref{5.5} holds only with $\theta=4$, then this lower 
bound for $r_{k,s}(n)$ holds subject to GRH.
\end{theorem}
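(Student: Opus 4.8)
The plan is to assemble Theorem~\ref{thm8.1} from the pieces already in place, namely the lower bound on the core major arcs from Lemma~\ref{lemma3.1}, the pruning near the root in Lemma~\ref{lemma4.3}, and the combined minor/intermediate arc bound of Lemma~\ref{lemma5.1}. First I would fix $\delta$ via the smoothness conventions of Section~5, setting $R=P^\eta$ for $\eta$ small enough that Lemmata~\ref{lemma2.2} and~\ref{lemma2.3}, and hence Lemma~\ref{lemma5.1}, apply with the stated $\eta$; since only finitely many invocations occur we pass to the smallest such $\eta$. Recall $P=n^{1/k}$ and the Farey dissection $[0,1]=\mathfrak l\cup(\mathfrak L\setminus\mathfrak N)\cup\mathfrak N$, so that
\[
I(n)=I(n,\mathfrak N)+I(n,\mathfrak L\setminus\mathfrak N)+\int_{\mathfrak l}g(\alpha)f(\alpha)^s e(-\alpha n)\,\mathrm d\alpha.
\]

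The second step is to bound each term. From Lemma~\ref{lemma3.1}, since $s\ge k+3\ge 3$ and $0<\delta\le 1$, we get $I(n,\mathfrak N)\gg n^{s/k}$. From Lemma~\ref{lemma4.3}, since $s\ge k+3$ and $R\le P^\eta\le P^{1/7}$ for small $\eta$, we get $I(n,\mathfrak L\setminus\mathfrak N)\ll n^{s/k}(\log n)^{-1/(50k)}=o(n^{s/k})$. For the last term I would verify the hypotheses of Lemma~\ref{lemma5.1}: put $\Delta_s=k\Eta(s/k)$ if $s$ is even, and for general natural $s$ take $\Delta_s$ to be the admissible exponent for the next even integer above $s$, with $\Delta_{s+t}$ similarly; then check that the conditions \eqref{5.5} hold with $(s,t)=(s_0,t_0)$ and $\theta=5$ by hypothesis, and that admissible exponents are monotone enough (or simply invoke that $\Delta_s\le\Delta_{s_0}$, $\Delta_{s+t}\le\Delta_{s_0+t_0}$ for $s\ge s_0$, using that $\Eta$ is decreasing) so that \eqref{5.5} persists with the actual $(s,t)$. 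Lemma~\ref{lemma5.1} then yields $\int_{\mathfrak l}|g(\alpha)f(\alpha)^s|\,\mathrm d\alpha\ll P^s(\log n)^{-1}=o(n^{s/k})$.

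Combining, $I(n)\gg n^{s/k}-o(n^{s/k})\gg n^{s/k}$, and then \eqref{3.3} gives $r_{k,s}(n)\log n\ge I(n)\gg n^{s/k}$, i.e.\ $r_{k,s}(n)\gg n^{s/k}/\log n$. For the conditional statement, if \eqref{5.5} holds only with $\theta=4$, the same argument applies verbatim except that Lemma~\ref{lemma5.1} is invoked in its GRH form (where $\theta=4$), so the conclusion follows subject to GRH. The main obstacle I anticipate is purely bookkeeping: ensuring the passage from the even-integer admissible exponents of Lemma~\ref{lemma2.2} to admissible exponents for the possibly non-even values $s$, $s+t$ actually used, and confirming that inequalities \eqref{5.5} are preserved under the relevant monotonicity — there is a slight subtlety in that the hypothesis is phrased for given reals $s_0,t_0$ with their own admissible exponents, and one must cleanly interpolate or dominate to recover \eqref{5.5} for the integer $s\ge\max\{s_0,k+3\}$ that appears in the conclusion. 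Once that is handled, the rest is a routine triangle-inequality assembly of the three arc contributions.
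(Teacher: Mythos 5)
Your assembly is the same as the paper's: split $[0,1]=\mathfrak N\cup(\mathfrak L\setminus\mathfrak N)\cup\mathfrak l$, take $I(n,\mathfrak N)\gg n^{s/k}$ from Lemma~\ref{lemma3.1}, absorb $\mathfrak L\setminus\mathfrak N$ by Lemma~\ref{lemma4.3}, kill $\mathfrak l$ by Lemma~\ref{lemma5.1}, and finish with \eqref{3.3}. The one place where your write-up wobbles is exactly the bookkeeping step you flagged. The hypothesis of the theorem supplies \emph{some} admissible exponents $\Delta_{s_0}$, $\Delta_{s_0+t_0}$ satisfying \eqref{5.5}; these need not be the exponents $k\Eta(\cdot)$ of Lemma~\ref{lemma2.2} (in the applications to Theorems~\ref{thm1.2} and \ref{thm1.4} they come from the Vaughan--Wooley tables and are strictly better), so your first suggestion --- set $\Delta_s=k\Eta(s/k)$ and ``check \eqref{5.5} by hypothesis'' --- does not follow, and the appeal to $\Eta$ being decreasing is likewise beside the point. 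What does work, and is all you need, is the trivial observation that an admissible exponent is inherited upward: since $|f(\alpha)|\le P$, any $\Delta_u$ admissible for $u$ is admissible for every $u'\ge u$ with the same numerical value, so you may take $\Delta_s:=\Delta_{s_0}$ and $\Delta_{s+t_0}:=\Delta_{s_0+t_0}$, and \eqref{5.5} for $(s,t_0)$ follows because $t_0/s\le t_0/s_0$. The paper sidesteps even this by applying Lemma~\ref{lemma5.1} directly with the real exponent $s_0$ and bounding $|f(\alpha)|^{s-s_0}\le P^{s-s_0}$ inside the integral over $\mathfrak l$, which is the same trivial estimate applied one step earlier; with that repair your argument is complete and matches the paper's proof.
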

   
\begin{proof} We choose $R=P^\eta$ with a value of $\eta$ satisfying $0<\eta<1/7$ 
chosen so small that Lemma \ref{lemma5.1} applies with $s=s_0$. Then, for $s\ge s_0$, 
we estimate $|f(\alpha)|^{s-s_0}$ trivially, and conclude via \eqref{3.2} that
\[
I(n,\mathfrak l)\ll n^{s/k}(\log n)^{-1}.
\]
Next, since $\mathfrak L=\mathfrak \grN\cup (\mathfrak L\setminus \mathfrak N)$, we find 
by combining Lemmata \ref{lemma3.1} and \ref{lemma4.3} via \eqref{3.2} that 
$I(n,\mathfrak L)\gg n^{s/k}$. Recalling that $I(n)=I(n,\mathfrak L) +I(n,\mathfrak l)$, the 
theorem is now immediate from \eqref{3.3}.
\end{proof}

We are now equipped to complete the proofs of the theorems presented in the introduction.

\begin{proof}[The proof of Theorems {\rm\ref{thm1.2}} and {\rm\ref{thm1.4}}] We verify 
the conditions in Theorem \ref{thm8.1} in the relevant cases by using explicit admissible 
exponents extracted from the tables in the appendices to \cite{VW, VW2000}, suitably 
rounded up, as we now explain.\par

Given an exponent $k$ with $5\le k\le 20$ and a value of $\theta\in \{4,5\}$, we fix values 
of $s_\theta$ and $t_\theta$ according to Table \ref{tab2}. We make use of the tables of 
permissible exponents $\lambda_u$ associated to each value of $k$ from the appendices of 
\cite{VW} and \cite{VW2000}, using the former for $k=5,6$, and the latter for 
$7\le k\le 20$. The admissible exponents $\Delta_u$ of the present paper are related to 
these tabulated values when $u$ is even via the relation
\[
\Delta_u=\lambda_{u/2}-u+k.
\]
When $u$ is odd, we may apply H\"older's inequality to interpolate between even values, so 
that
\[
\Delta_u=\tfrac{1}{2}\left( \lambda_{(u+1)/2}+\lambda_{(u-1)/2}\right) -u+k.
\]
When $u$ is either $s_\theta$ or $s_\theta+t_\theta$, admissible exponents calculated in 
this way are presented in Table \ref{tab2} to $4$ decimal places, rounded up in the final 
place. The final entries recorded in Table \ref{tab2} are computed values of the quantities
\[
\Omega_\theta =\frac{t_\theta}{s_\theta}+\frac{\theta\Delta_{s_\theta+t_\theta}}{k},
\]
again presented to $4$ decimal places rounded up in the final place. We thus see that both 
of the conditions \eqref{5.5} are met provided that $2\Del_{s_\theta}<k$ and 
$\Omega_\theta<1$, and such is the case for all entries of Table \ref{tab2}. By taking 
$s_0=s_\theta$ and $t_0=t_\theta$ in Theorem \ref{thm8.1}, we confirm the conclusion 
of Theorem \ref{thm1.2} in the case $\theta=5$, and likewise Theorem \ref{thm1.4} in the 
case $\theta=4$. This completes the proof of these theorems.
\end{proof}

\begin{table}[h]
\begin{center}
\begin{tabular}{ccccccccccc}
\toprule
$k$ & $s_4$  & $t_4$  & $\Delta_{s_4}$ & $\Delta_{s_4+t_4}$ & $\Omega_4$ & $s_5$ & $t_5$ & $\Delta_{s_5}$ & $\Delta_{s_5+t_5}$ & $\Omega_5$\\
\toprule
$5$ &  $8$  & $4$ & $1.4387$ & $0.5418$ & $0.9335$ & & & & & \\
$6$ & $10$ & $4$ & $1.7247$ & $0.8506$ & $0.9671$ & $11$ & $3$ & $1.4782$ &  $0.8516$ & $0.9816$ \\
$7$ & $12$ & $6$ & $2.0144$ & $0.8470$ & $0.9840$ & $13$ & $5$ & $1.7778$ & $0.8470$ & $0.9897$ \\
$8$ & $14$ & $6$ & $2.3106$ & $1.1284$ & $0.9928$ & $16$ & $8$ & $1.8429$ & $0.6562$ & $0.9102$\\
$9$ & $17$ & $7$ & $2.3733$ & $1.1293$ & $0.9137$ & $18$ & $8$ & $2.1426$ & $0.8960$ & $0.9422$ \\
$10$ & $19$ & $7$ & $2.6661$ & $1.3944$ & $0.9262$ & $20$ & $10$ & $2.4376$ & $0.9304$ & $0.9652$ \\
$11$ & $21$ & $9$ & $2.9571$ & $1.3941$ & $0.9356$ & $22$ & $10$ & $2.7293$ & $1.1641$ & $0.9837$ \\
$12$ & $23$ & $9$ & $3.2438$ & $1.6487$ & $0.9409$ & $24$ & $12$ & $3.0175$ & $1.1896$ & $0.9957$ \\
$13$ & $25$ & $9$ & $3.5299$ & $1.9063$ & $0.9466$ & $27$ & $13$ & $3.0997$ & $1.2191$ & $0.9504$ \\
$14$ & $27$ & $11$ & $3.8147$ & $1.8961$ & $0.9492$ & $29$ & $13$ & $3.3840$ & $1.4408$ & $0.9629$ \\
$15$ & $29$ & $11$ & $4.0984$ & $2.1484$ & $0.9523$ & $31$ & $15$ & $3.6673$ & $1.4665$ & $0.9728$ \\
$16$ & $31$ & $13$ & $4.3819$ & $2.1408$ & $0.9546$ & $33$ & $15$ & $3.9503$ & $1.6884$ & $0.9822$ \\
$17$ & $32$ & $14$ & $4.8887$ & $2.3897$ & $0.9998$ & $35$ & $17$ & $4.2327$ & $1.7118$ & $0.9892$ \\
$18$ & $34$ & $14$ & $5.1707$ & $2.6416$ & $0.9988$ & $37$ & $19$ & $4.5147$ & $1.7385$ & $0.9965$ \\
$19$ & $36$ & $16$ & $5.4518$ & $2.6301$ & $0.9982$ & $40$ & $18$ & $4.5867$ & $1.9556$ & $0.9647$ \\
$20$ & $38$ & $16$ & $5.7323$ & $2.8788$ & $0.9969$ &  $42$ & $20$ & $4.8667$ & $1.9802$ & $0.9713$ \\
\bottomrule
\end{tabular}\\[6pt]
\end{center}
\caption{Choice of exponents for $5\le k\le 20$.}\label{tab2}
\end{table}

The proof of Theorems \ref{thm1.1} and \ref{thm1.3} is more involved, and this will 
require us to establish a technical result along the way. Again, we aim to verify the 
conditions of Theorem \ref{thm8.1}. The condition $2\Delta_s<k$ in \eqref{5.5} is only 
a moderate constraint on $s$. Indeed, writing $\sigma_0=\frac12+\log 2$, one finds from 
\eqref{2.1} that $\Eta(\sigma_0)=\frac12$. Hence, there is an even integer $s_0$ in the 
interval $(\sigma_0k,\sigma_0k +2]$, and we have $\Eta(s_0/k)<\frac12$. By Lemma 
\ref{lemma2.2} with $t=s_0$, it follows that whenever $s\ge s_0$, there is an admissible 
exponent $\Delta_s<k/2$, as required. In particular, we now have the first inequality in 
\eqref{5.5} whenever $s\ge \sigma_0+2$.\par

The second inequality in \eqref{5.5} is the more restrictive condition. Throughout, let 
$\theta=4$ or $5$. Note that whenever $s+t$ is an even integer, then by Lemma 
\ref{lemma2.2} we may assume that the exponent $\Delta_{s+t}=k\Eta((s+t)/k)$ is 
admissible. With this choice of $\Delta_{s+t}$ we write
\[
s=\sigma k,\qquad t=\tau k,
\]
and then have 
\begin{equation}\label{8.1}
\frac{t}{s}+\frac{\theta \Delta_{s+t}}{k}=\frac{\tau}{\sigma}+\theta\Eta(\sigma+\tau). 
\end{equation}
For a given value of $s$ one wishes to minimize this expression, and hence one has to 
choose $\tau$ optimally, or at least nearly so. The constraint $0\le t\le s$ translates to 
$0\le \tau\le \sigma$. We temporarily ignore that \eqref{8.1} is available only for even 
values of $s+t$, a complication soon to be resolved in a technical lemma. Instead, we 
compute the function $E_\theta:[\frac54, 3]\to\mathbb R$ defined by
\begin{equation}\label{8.2}
E_\theta (\sigma)=\min_{0\le\tau\le \sigma}h(\tau),
\end{equation}
where for each fixed $\sigma\in[\frac54,3]$, the function $h:[0,\infty)\to\mathbb R$ is 
defined by 
\begin{equation}\label{8.3}
h(\tau)=\frac{\tau}{\sigma} + \theta\Eta(\sigma+\tau). 
\end{equation}
Notice that in naming the function $h$, we have suppressed its dependence on both 
$\theta$ and $\sigma$ in order to simplify our exposition.\par

The function $h$ is continuously differentiable, and we find from \eqref{2.2} that
\[
h'(\tau)=\frac1{\sigma}-\frac{\theta\Eta(\sigma+\tau)}{1+\Eta(\sigma+\tau)}.
\]
Note that $h'(\tau) <0$ if and only if $\Eta(\sigma+\tau)> 1/(\theta\sigma -1)$. In 
particular, we see from Lemma \ref{lemma2.1} that $h'(0)$ is negative, so that the function 
$h$ is decreasing for small values of $\tau$. Also, the equation $h'(\tau)=0$ is equivalent to
\begin{equation}\label{8.4}
\Eta(\sigma+\tau)=\frac1{\theta\sigma-1}. 
\end{equation}
Since $ \Eta(\sigma+\tau)$ is continuous in $\tau$ and strictly decreases from  
$\Eta(\sigma)$ to $0$ as $\tau$ runs through positive real values, we conclude from 
Lemma \ref{lemma2.1} that \eqref{8.4} has exactly one solution that we denote by 
$\tau(\sigma)$. We apply \eqref{2.1} to $\Eta(\sigma+\tau(\sigma))$ and insert the 
identity \eqref{8.4}, thus finding that
\begin{equation}\label{8.5}
\tau(\sigma)=1-\sigma-\frac1{\theta\sigma-1}+\log(\theta\sigma-1). 
\end{equation}
In view of \eqref{8.3}, the function $h(\tau)$ is increasing for large $\tau$, whence 
$h(\tau(\sigma))$ is the minimum value of $h$. Further, since $h'(0)<0$, we must have 
$\tau(\sigma)>0$.\par

With \eqref{8.2} in mind, we now show that one has $\tau(\sigma)<\sigma$. By 
\eqref{8.5}, this inequality holds if and only if
\begin{equation}\label{8.6}
1+ \log(\theta\sigma-1)<2\sigma +\frac1{\theta\sigma-1}.
\end{equation}
This is readily checked numerically for $\sigma=\frac54$, and amounts to checking that
\[
2.658\ldots =1+\log \Bigl( \frac{21}{4}\Bigr)<\frac{5}{2}+\frac{1}{21/4}=2.690\ldots .
\]
Meanwhile, for $\sigma>\frac54$, the derivative of the left hand side of \eqref{8.6} is 
smaller than the derivative of the right hand side. Again, this is easily checked. Thus we 
discern that the upper bound $\tau(\sigma)<\sigma$ holds for $\sigma\in[\frac54,3]$. 
Consequently, by \eqref{8.2} and \eqref{8.4},
\begin{equation}\label{8.7}
E_\theta(\sigma)=\frac{\tau(\sigma)}{\sigma} + \theta\Eta(\sigma+\tau(\sigma))
=\frac{\tau(\sigma)}{\sigma} + \frac{\theta}{\theta\sigma-1}. 
\end{equation}

Next we note that $\tau(\sigma)$ is decreasing on the interval $[\frac32, 3]$. Indeed, it 
follows from \eqref{8.5} that throughout the latter interval, the derivative
\begin{equation}\label{8.8}
\tau'(\sigma)=-1+\frac{\theta}{\theta\sigma-1}+\frac{\theta}{(\theta \sigma -1)^2}
\end{equation}
is negative. Since $1/\sigma$ also decreases, we see from \eqref{8.7} that 
$E_\theta(\sigma)$ is strictly decreasing on $[\frac32,3]$. One may check numerically from 
\eqref{8.5} and \eqref{8.7} that
\[
E_\theta(3)<1<E_\theta(3/2).
\]
Hence there is a unique number $c_\theta$ with $E_\theta(c_\theta)=1$. If we now 
eliminate $\tau(c_\theta)$ between \eqref{8.5} and \eqref{8.7}, then we obtain
\[
1-c_\theta-\frac{1}{\theta c_\theta -1}+\log (\theta c_\theta -1)
=c_\theta -\frac{\theta c_\theta }{\theta c_\theta -1}.
\]
This equation shows that $c_5$ is the real number $c$ occuring in the statement of 
Theorem \ref{thm1.1}, and that $c_4$ is the real number $c'$ occuring in the statement of 
Theorem \ref{thm1.3}.\par

We now attend to the key technical lemma previously advertised.

\begin{lemma}\label{lemma8.2}
Let $\theta\in \{4,5\}$ and $k\ge 17$. Then there exists a number 
$\sigma_\theta=\sigma_{\theta,k}$, with $\sigma_\theta \in (c_\theta,c_\theta+4/k)$, 
satisfying the condition that $k(\sigma_\theta+\tau(\sigma_\theta))$ is an even integer.
\end{lemma}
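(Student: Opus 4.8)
The plan is to apply the intermediate value theorem to the function $\psi(\sigma)=k\bigl(\sigma+\tau(\sigma)\bigr)$ on the interval $[c_\theta,c_\theta+4/k]$. First I would record that for $k\ge 17$ and $\theta\in\{4,5\}$ this interval lies inside $[3/2,3]$, since $c_\theta+4/k\le c_5+4/17<3$ and $c_\theta\ge c_4>3/2$; thus the formula \eqref{8.5}, the derivative formula \eqref{8.8}, and the sign information on $\tau'$ obtained earlier (namely $\tau'<0$ and $\tau'$ decreasing on $[3/2,3]$) are all available. From \eqref{8.8} one has $\psi'(\sigma)=k\bigl(1+\tau'(\sigma)\bigr)=k\bigl(\tfrac{\theta}{\theta\sigma-1}+\tfrac{\theta}{(\theta\sigma-1)^2}\bigr)>0$, so $\psi$ is continuous and strictly increasing, and therefore maps $(c_\theta,c_\theta+4/k)$ bijectively onto the open interval $\bigl(\psi(c_\theta),\psi(c_\theta+4/k)\bigr)$. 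It then suffices to show that this image interval has length strictly greater than $2$: an open interval of length exceeding $2$ contains an even integer $2N$, and $\psi^{-1}(2N)$ is the desired $\sigma_\theta$.

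The heart of the matter is the lower bound $\psi(c_\theta+4/k)-\psi(c_\theta)=4-k\bigl(\tau(c_\theta)-\tau(c_\theta+4/k)\bigr)>2$, equivalently $k\bigl(\tau(c_\theta)-\tau(c_\theta+4/k)\bigr)<2$. Here I would use that $-\tau'$ is positive and increasing on $[3/2,3]$, so that $\tau(c_\theta)-\tau(c_\theta+4/k)=\int_{c_\theta}^{c_\theta+4/k}\bigl(-\tau'(\sigma)\bigr)\,\mathrm d\sigma\le \tfrac4k\bigl(-\tau'(c_\theta+4/k)\bigr)$, whence $\psi(c_\theta+4/k)-\psi(c_\theta)\ge 4\bigl(1+\tau'(c_\theta+4/k)\bigr)=4\bigl(\tfrac{\theta}{y}+\tfrac{\theta}{y^2}\bigr)$ with $y=\theta(c_\theta+4/k)-1$. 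Since $y\le\theta c_\theta-1+4\theta/17=:x_\theta$ for $k\ge 17$ and $z\mapsto \theta z^{-1}+\theta z^{-2}$ is decreasing, the whole problem collapses to the single numerical inequality $\tfrac{\theta}{x_\theta}+\tfrac{\theta}{x_\theta^2}>\tfrac12$ for $\theta\in\{4,5\}$, which is checked directly from the decimal values of $c=c_5$ and $c'=c_4$ (one gets $x_5\approx 10.85$ and $x_4\approx 7.79$).

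The main obstacle is that the final inequality is genuinely tight when $\theta=5$: one computes $5/x_5+5/x_5^2\approx 0.5033$, only barely above $\tfrac12$, which explains why the statement insists on $k\ge 17$ and the window $4/k$. Consequently the numerical verification has to be carried out carefully, for example using enough digits of $c$ (or the defining relation $2c=2+\log(5c-1)$) to be certain the strict inequality holds; the case $\theta=4$, where $4/x_4+4/x_4^2\approx 0.5795$, is comfortable. Once the length bound $\psi(c_\theta+4/k)-\psi(c_\theta)>2$ is secured, the intermediate value theorem furnishes $\sigma_\theta\in(c_\theta,c_\theta+4/k)$ with $k\bigl(\sigma_\theta+\tau(\sigma_\theta)\bigr)=2N$ an even integer, completing the argument.
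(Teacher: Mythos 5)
Your proposal is correct and follows essentially the same route as the paper: both show $k(\sigma+\tau(\sigma))$ is increasing on $(c_\theta,c_\theta+4/k)$ and reduce the existence of an even integer in the image to the bound $k\bigl(\tau(c_\theta)-\tau(c_\theta+4/k)\bigr)<2$, which in turn comes down to the inequality $\tfrac{\theta}{\theta\sigma-1}+\tfrac{\theta}{(\theta\sigma-1)^2}>\tfrac12$ at the right end of the interval. The only cosmetic differences are that you bound the increment of $\tau$ by integrating the monotone function $-\tau'$ rather than invoking the mean value theorem, and you verify the final inequality by direct numerical evaluation at $x_\theta$ instead of rewriting it as the quadratic condition $(\theta\sigma-1-\theta)^2<\theta^2+2\theta$ as the paper does.
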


\begin{proof} We begin by noting from \eqref{8.8} that the function 
$k(\sigma+\tau(\sigma))$ is increasing on $(c_\theta,c_\theta+4/k)$. It follows that this 
function maps the latter interval onto
\[
\left( kc_\theta+k\tau(c_\theta), kc_\theta +4+k\tau(c_\theta+4/k)\right) .
\]
Provided that the inequality
\begin{equation}\label{8.9}
k(\tau(c_\theta)-\tau(c_\theta+4/k))<2 
\end{equation}
holds, we see that the set of values of $k\sigma+k\tau(\sigma)$, for 
$\sigma\in (c_\theta,c_\theta+4/k)$, covers an interval of length exceeding $2$, and 
therefore contains an even integer $2l$, say. One then has 
$2l=k(\sigma_\theta+\tau(\sigma_\theta))$ for some 
$\sigma_\theta\in (c_\theta,c_\theta+4/k)$, as desired.\par

It remains to check \eqref{8.9}. By the mean value theorem and \eqref{8.8}, there exists 
a real number $\sigma \in (c_\theta,c_\theta+4/k)$ with 
\[
k\left( \tau(c_\theta )-\tau(c_\theta +4/k)\right) =-4\tau'(\sigma )=
4-\frac{4\theta }{\theta \sigma -1}-\frac{4\theta }{(\theta \sigma -1)^2}.
\]
However, we have
\[
\frac{\theta }{\theta \sigma-1}+\frac{\theta }{(\theta \sigma-1)^2} >\frac12
\]
provided only that
\[
(\theta \sigma-1-\theta)^2<\theta^2+2\theta ,
\]
and this is assured whenever
\[
1+1/\theta-\sqrt{1+2/\theta}<\sigma<1+1/\theta+\sqrt{1+2/\theta}.
\]
When $\theta=5$, this last constraint is satisfied for $\sigma\in (0.017,2.383)$, and hence 
for $c<\sigma <c+4/k$ when $k\ge 17$. When $\theta=4$, meanwhile, this last constraint 
is satisfied for $\sigma\in (0.026,2.474)$, and hence with ease for $c'<\sigma <c'+4/k$ 
under the same condition on $k$. This establishes \eqref{8.9} for $k\ge 17$, and completes 
the proof of the lemma.
\end{proof}

Finally, we establish Theorems \ref{thm1.1} and \ref{thm1.3}, dividing the natural numbers 
$k$ into three ranges. In the first range $k\ge 17$, we check the conditions of Theorem 
\ref{thm8.1} when $\theta\in \{4,5\}$. Let $s_\theta =\sigma_\theta k$ and 
$t_\theta=\tau(\sigma_\theta)k$. Then $s_\theta +t_\theta$ is the even integer provided 
by Lemma \ref{lemma8.2}. Further, since $E_\theta(\sigma)$ is a strictly decreasing 
function on $[\tfrac{3}{2},3]$ and 
\[
3>c_\theta+4/k>\sigma_\theta>c_\theta>3/2,
\]
we have $E_\theta (\sigma_\theta )<E_\theta(c_\theta)=1$. By \eqref{8.1}, \eqref{8.2} 
and \eqref{8.3} we conclude that the second inequality in \eqref{5.5} holds with 
$s=s_\theta$ and $t=t_\theta$. By taking $(s_0,t_0)=(s_\theta, t_\theta)$ in Theorem 
\ref{thm8.1}, therefore, we obtain Theorem \ref{thm1.1} when $\theta=5$, and Theorem 
\ref{thm1.3} when $\theta=4$.\par

In the second range $\theta+1\le k\le 16$, the conclusions of Theorems \ref{thm1.1} and 
\ref{thm1.3} follow, respectively, from Theorems \ref{thm1.2} and \ref{thm1.4}. The third 
range $1\le k<\theta$, meanwhile, is more or less trivial. Indeed, our remarks concerning 
the naive decoupling approach in the preamble to the statement of Theorem \ref{thm1.2} 
already establish Theorem \ref{thm1.1} in the case $k=4$, and the work of Kawada 
\cite[Theorem 2]{Kaw1996} confirms both Theorem \ref{thm1.1} and \ref{thm1.3} when 
$k=3$. This leaves the case $k=2$ handled by Hooley \cite{HooActa, HooTract} and Linnik 
\cite{L1, L2}, and the trivial case $k=1$. With these observations, all of the loose ends 
associated with the confirmation of Theorems \ref{thm1.1} and \ref{thm1.3} have been 
tied up. 

\bibliographystyle{amsbracket}
\providecommand{\bysame}{\leavevmode\hbox to3em{\hrulefill}\thinspace}

\end{document}